\documentclass[ctagsplt,12pt]{amsart}
\RequirePackage{newlfont}
\usepackage{amscd}
\usepackage{amsmath}
\usepackage{amsfonts}
\usepackage{amssymb}
\usepackage{enumerate}
\usepackage{graphicx}
\usepackage{latexsym}
\usepackage{color}
\usepackage{bbm}

\usepackage{relsize} 

\numberwithin{equation}{section}

\begin{document}

	\title[Limiting behavior]{Limit as $p(x)\rightarrow \infty$ of $p(x)$-Harmonic functions for unbounded $p(x)$}
	
	\author[ B. Djafari Rouhani, J. Lang, O. M\'{e}ndez]{ B. Djafari Rouhani, J. Lang, O.M\'{e}ndez}

	\address{Osvaldo M\'{e}ndez\\Department of Mathematical Sciences, The University of Texas at El Paso, El Paso, TX 79968, USA}
	\email{osmendez@utep.edu}
	\address{Behzad Rouhani\\ Department of Mathematical Sciences, University of Texas at El Paso}
	\email{behzad@math.utep.edu}
	
	\subjclass[2020]{Primary 46A16, Secondary 46B20, 46E30, 46A20}
	\keywords{Luxemburg norm, modular topology, modular vector space, variable exponent spaces, unbounded exponents.}
	
	\begin{abstract}
		It is shown that if $p_n$ is a sequence of continuous, unbounded exponents on a bounded, smooth domain $\Omega\subset {\mathbb R}^n$ with $1<\inf\limits_{x\in \Omega}p_n(x)$ and $p_n\rightarrow \infty$ uniformly, then the sequence $(u_n)$ of solutions of the $p_n(\cdot)$-Laplacian converges to the viscosity solution of a suitable differential operator. The novelty here is that each term of the sequence of exponents $(p_n)$ is allowed to be unbounded in $\Omega$. 
	\end{abstract}
	\maketitle
	\newtheorem{question}{Question}
	\newtheorem{property}{Property}
	\newtheorem{theorem}{Theorem}[section]
	\newtheorem{acknowledgement}{Acknowledgement}
	\newtheorem{algorithm}{Algorithm}
	\newtheorem{axiom}{Axiom}[section]
	\newtheorem{case}{Case}
	\newtheorem{claim}{Claim}
	\newtheorem{conclusion}{Conclusion}
	\newtheorem{condition}{Condition}
	\newtheorem{conjecture}{Conjecture}
	\newtheorem{corollary}{Corollary}[section]
	\newtheorem{criterion}{Criterion}
	\newtheorem{definition}{Definition}[section]
	\newtheorem{example}{Example}[section]
	\newtheorem{exercise}{Exercise}
	\newtheorem{lemma}{Lemma}[section]
	\newtheorem{notation}{Notation}
	\newtheorem{problem}{Problem}
	\newtheorem{proposition}{Proposition}[section]
	\newtheorem{remark}{Remark}[section]
	\newtheorem{solution}{Solution}
	\newtheorem{summary}{Summary}
	\newtheorem{assumption}{Assumptions}

	\thispagestyle{empty}

\section{Introduction}\label{background}

This note complements the line of work initiated in \cite{BO}. Specifically, we study the limiting behavior of the solutions of a sequence of Dirichlet problems involving the $p(\cdot)$-Laplacian operator, in the spirit of \cite{MRU}. The fundamental difference between our work and \cite{MRU} is that we allow for the involved variable exponents to be {\it unbounded} in $\Omega$, that is, we include the case $\sup\limits_{x\in \Omega}p(x)=\infty.$\\ 

Specifically, let $\Omega\subset {\mathbb R}^n$ be a bounded, $C^2$ domain, $p:\Omega \rightarrow {\mathbb R}$ be a continuous function such that $1<p_-:=\inf\limits_{x\in \Omega}\ p(x)\leq p_+:=\sup\limits_{x\in \Omega}\ p(x)$. Let $\varphi\in W^{1,p(\cdot)}(\Omega)$ satisfy $\int_{\Omega}p(x)^{-1}|\nabla \varphi(x)|^{p(x)}dx<\infty$. The Dirichlet problem
\begin{equation}\label{DP}
	\begin{cases}
		\Delta_{p(\cdot)}(w) = \text{div}\left( |\nabla w|^{p(\cdot)-2} \nabla w \right) = 0 & \text{in} \ \Omega, \\
		w|_{\partial \Omega} = \varphi,
	\end{cases}
\end{equation}
calls for a $p(x)$-harmonic function $w\in W^{1,p(\cdot)}(\Omega)$ such that $w-\varphi$ vanishes on the boundary $\partial\Omega$; this statement will be written shortly as $u|_{\partial \Omega}=\varphi$ and its meaning will be precised later. 

\smallskip

On the other hand, the limiting case of the $p$-Laplacian operator $\Delta_{p(\cdot)}$ is well understood and takes up the form of the so called infinity Laplacian, namely		
\begin{equation}	\label{infinitylaplacian}
	\Delta_{\infty}u=\langle D^2(u),\nabla u\rangle\cdot\nabla u=\sum\limits_{i,j=1}^n u_{x_j}u_{x_i}u_{x_ix_j}.
\end{equation}
As usual, $\langle A ,a \rangle $ stands for the standard action of the matrix $A$ on the vector $a$ and $\cdot$ denotes the  dot product on ${\mathbb R}^n$. See \cite{MRU} and the references therein for further details regarding this operator.\\
Our main contribution is Theorem \ref{theoremain}, which shows that letting $p(x)\rightarrow \infty$ uniformly in $\Omega$, the solution $u_p$ of problem (\ref{DP}) converge uniformly to the unique viscosity solution of 
\begin{equation}	\label{dirichletinfinitylaplacian}
	\begin{cases}	\Delta_{\infty}u=0\,\,\text{in}\,\,\Omega \\
		u|_{\partial\Omega}=\varphi.
	\end{cases}
\end{equation}
Theorem \ref{theoremain} improves the results in \cite{MRU}, since it does not rely on the implicit assumption $\sup\limits_{x\in \Omega}p(x)<\infty$, which is used in the proof of. \cite[Theorem 1.1]{MRU} in a fundamental way.\\
The variability of the exponent $p(x)$ adds at least two levels of difficulty not visible in the constant case. Firstly, even for $p(x)<\infty$ a.e. in $\Omega$, it might hold $\sup\limits_{x\in \Omega}p(x)=\infty$. Secondly, it is known that smooth functions with compact support are in general not dense in the space of compactly supported functions in $W^{1,p(\cdot)}(\Omega)$, even when the variable exponent $p$ is bounded \cite{Harj}. In this context, the idea of weak solution of problem (\ref{DP}) is ambiguous \cite{BDS,Harj}.\\
More specifically, it was proved in \cite[Theorem 7.2]{AOJA} that for variable exponent $p$ with $p_->n$, there exists a unique weak solution $u\in W^{1,p}(\Omega)$ of problem (\ref{DP}) satisfying the identity
\begin{equation}\label{weaksolclass}
	\int\limits_{\Omega}|\nabla u|^{p-2}\nabla u\nabla h\,dx=0
\end{equation}
for any $h\in C^{\infty}_0(\Omega).$
However, it is well known that for a variable exponent $p$ on $\Omega$, the norm closure of $C^{\infty}_0(\Omega)$ in $W^{1,p}(\Omega)$ might be strictly contained in the norm closure of the subspace $W^{1,p}_{com}(\Omega)=\{v\in W^{1,p}(\Omega):\,\, supp\,v\,\,\text{compact}\}$ (see \cite{Harj}). Thus, the weak solution obtained in \cite[Theorem 7.2]{AOJA} might fail to satisfy (\ref{weaksolclass}) for $h\in W^{1,p}_{com}(\Omega)$. \\
In more precise terms, let $V^{1,p}_0(\Omega)$ and $U^{1,p}_0(\Omega)$ be the modular closures in $W^{1,p}(\Omega)$ of $C^{\infty}_0(\Omega)$ and $W^{1,p}_{comp}(\Omega)$, respectively. In \cite[Example 3.9]{Harj} the author displays a domain $\Omega$, a variable exponent $q$ and a function $\psi \in U^{1,q}_0(\Omega)\setminus V^{1,q}_0(\Omega)$. The unique solution $v\in W^{1,q}(\Omega)$ given by 
\cite[Theorem 7.2]{AOJA} with boundary value $\varphi=\psi$ satisfies the equality (\ref{weaksolclass}) (replacing $p$ with $q$) for every $h \in C^{\infty}_0(\Omega)$ but not for each $h\in W^{1,q}_{com}(\Omega)$.\\
Our chief concern before undertaking the analysis of the limiting behaviour of the solutions is thus to clarify the type of weak solution of the problem (\ref{DP}) for which our limiting process is valid. We undertake this task in Section \ref{dirichletintegral}.
\\ The main obstacle in the analysis of Section \ref{dirichletintegral} arises from the lack of boundedness of the exponents $p_j$. This prevents the direct application of standard compactness and regularity results such as those in, for example \cite{FZ}, and requires a refined use of modular techniques.

\smallskip
Our presentation is organized as follows. 
In Section \ref{functionspaces} a brief summary of the basic definitions and modular geometric properties of the variable exponent Lebesgue and Sobolev spaces is introduced. Since our approach is essentially modular in nature, in Section \ref{modulartopologies} we summarize the modular concepts required for the proof of Theorem \ref{Dirichletvarphi} and introduce the spaces $V^{1,p}_0(\Omega)$ and $U^{1,p}_0(\Omega)$ defined as the modular closures of $C^{\infty}_0(\Omega)$ and $W^{1,p}_{comp}(\Omega)$, respectively.\\

In Section \ref{dirichletintegral} we prove Theorem \ref{Dirichletvarphi}, which, under the assumption $p_-=\inf\limits_{x\in \Omega}p(x)>n$, asserts existence and uniqueness of a weak solution $w$ of problem (\ref{DP}) such that $ w\in W^{1,p}(\Omega)$ and that satisfies (\ref{weaksolclass}) for all $h\in W^{1,p}_{com}(\Omega)$.\\ It must be emphasized that
in spite of its similarity with \cite[Theorem 7.2]{AOJA}, the results in Section \ref{dirichletintegral} are new, since they involve the minimization of the Dirichlet integral (\ref{Dirichletintegral}) on a space that is strictly larger than that considered in \cite{AOJA} and the weak solution emerging in this way might be different from the one in Theorem 7.2 in \cite{AOJA}.

\medskip
The theme in Section \ref{viscsol} is that the weak solution given by Theorem \ref{Dirichletvarphi} is in fact a viscosity solution of the problem (\ref{DP}). Our proof of Proposition \ref{weakviscosity} differs from Proposition 2.3 in \cite{MRU} in that it is not clear to the authors why the function $\left(\Phi-u\right)^+$ used in \cite[Proposition 2.3]{MRU} qualifies as a test function for the weak formulation of \cite[Problem $(1.1)_n$]{MRU} (i.e., belongs to the norm closure of $C^{\infty}_0(\Omega)$, see Remark \ref{comparisonvu}) below); our function $\Phi$ introduced in (\ref{Phi}) is by definition in $W^{1,p}_{com}(\Omega)$ and thus it fits the definition of weak solution of problem (\ref{problemr1}), as stated in (\ref{weakformulation}).\\	

The central result of this work is presented in Section \ref{main}. Specifically, let $(p_j)$ be a sequence of functions satisfying the assumptions (\ref{c1})-(\ref{c3}) in Section \ref{main}. In particular, we allow $\sup\limits_{x\in \Omega}p_j(x)=\infty.$ Fix $\varphi\in W^{1,\infty}(\Omega)$ and for each $j$, let $u_j\in U^{1,p}_0(\Omega)$ be the unique solution to the Dirichlet problem for variable exponent $p_j(\cdot)$-Laplacian
\begin{equation}\label{problemr1}
	\begin{cases}
		\Delta_{p_j(x)}u(x):=\text{div}\big(|\nabla u(x)|^{p_j(x)-2}\nabla u(x)\big) =0, & x\in\Omega\\
		u(x)=\varphi(x), & x\in\partial\Omega,
	\end{cases}
\end{equation}
whose existence and uniqueness is proved in Theorem \ref{Dirichletvarphi}. Notice that $u_j$ might be different from the solution in $V^{1,p}_0(\Omega)$ obtained in \cite{AOJA}, since in general $V^{1,p}_0(\Omega)\subsetneq U^{1,p}_0(\Omega)$, even for bounded exponents $p_j$, as observed in \cite{Harj}.\\
Then
\[
u_j\to u \quad \text{uniformly in }\Omega,
\]
where $u$ is the unique viscosity solution of
\begin{equation}\label{limitproblem}
	\begin{cases}
		-\Delta_\infty u-|\nabla u|^2\ln|\nabla u|\langle \xi,\nabla u\rangle=0
		& \text{in }\Omega,\\
		u=\varphi & \text{on }\partial\Omega.
	\end{cases}
\end{equation}
Theorem \ref{theoremain} was proved in \cite[Theorem 1.1]{MRU} under the additional assumption $\sup\limits_{x\in \Omega}p_j(x)=p_+<\infty$, see \cite[inequality (2.9)]{MRU}. It is claimed there that the boundedness of each $u_j$ follows from the condition given in \cite[condition (1.4)]{MRU}, namely
\begin{equation}\label{1.4}
	\nabla \ln{p_j}\rightarrow \xi \in C(\Omega)\,\,\text{uniformly in} \,\,\Omega.
\end{equation}
However, (\ref{1.4}) does not guarantee the boundedness of each $p_j$, as evidenced by the example $\Omega=(0,1)$,  $p_j(x)=j\frac{e^{\frac{1}{x}}}{1-x}$, $\xi(x)=-\frac{1}{x^2}+\frac{1}{1-x}$. The results in \cite{MRU} require the more restrictive condition
\begin{equation}
	\nabla \ln{p_j}\rightarrow \xi \in C(\overline{\Omega}).
\end{equation}
\section{Preliminaries}\label{functionspaces}
The main results and definitions in this Section have been extensively discussed in \cite{DHHR,OAP,AOJA,KR}. In the sequel, $\Omega\subset {\mathbb R}^n$ will stand for a bounded, smooth domain with boundary $\partial\Omega$ and $p:\Omega\rightarrow (1,\infty)$ denotes a Borel-measurable function subject to the constraints
\begin{equation}\label{constraintforp}
	1<p_-=\inf\limits_{\Omega}p(x)\leq \sup\limits_{\Omega}p(x)=p_+=\infty.
\end{equation}
For the sake of typographical simplicity, variable exponents will be denoted without specific reference to the spatial variable they depend upon, i.e., $p(x)$ will be written as $p$.\\
The next definition concerns the variable exponent Lebesgue and Sobolev spaces and their corresponding Luxemburg norm.
\begin{definition}\cite{DHHR, KR}\label{deflp}
	In the notation of Section \ref{background}, let $p:\Omega\rightarrow  [1,\infty)$ be Borel measurable. Notice that we include the case $p_+=\sup\limits_{x\in \Omega}p(x)=\infty.$ We set
	\begin{equation*}
		L^{p}(\Omega)=\left\{f:\int\limits_{\Omega}|\lambda f(x)|^{p}\,dx<\infty,\,\text{for some}\,\,\lambda>0\right\}.
	\end{equation*}
	It is well known that when endowed with the Luxemburg norm defined by
	\begin{equation*}
		\|f\|_{p}=\inf\left\{\lambda>0: \int\limits_{\Omega}\left(|f(x)|/\lambda)\right)^{p}dx\leq 1\right\}
	\end{equation*}
	$L^p(\Omega)$ becomes a Banach space, which is uniformly convex if and only if $1<p_-=\inf\limits_{x\in \Omega}p(x)\leq p_+=\sup\limits_{x\in \Omega}p(x)<\infty.$ In particular, if $p_+=\infty$, the Luxemburg norm $\|\cdot\|_{p}$ on $L^{p}(\Omega)$ is not uniformly convex, \cite{Luk}. 
\end{definition}

It is a routine exercise to show that when $p$ is constant on $\Omega$, the above defined spaces coincide with the usual Lebesgue spaces. \\
If $p\leq q$ are measurable in $\Omega$, the embedding
$L^q(\Omega)\hookrightarrow L^p(\Omega)$ is continuous, i.e., there exists a positive constant $C(p,q,\Omega)$ such that
$\|u\|_p\leq C(p,q,\Omega)\|u\|_q$ for any $u\in L^q(\Omega)$ \cite{KR}.\\

\begin{definition}\cite{DHHR,KR}
	\begin{equation*}
		W^{1,p(\cdot)}(\Omega)=\left\{f:f\in L^{p(\cdot)}(\Omega)\,\text{and}\,\, |\nabla f|\in L^{p(\cdot)}(\Omega)\right\},
	\end{equation*}
	where $|\nabla f|$ stands for the Euclidean norm of\,\,$\nabla f$ and the Sobolev norm is defined as
	\begin{equation}\label{sobolevnorm}
		\|f\|_{1,p(\cdot)}=\|f\|_{p(\cdot)}+\||\nabla f|\|_{p(\cdot)}.
	\end{equation}
\end{definition}
\subsection{Modulars and their geometry}\label{modulars}
In this Section we briefly summarize some facts on the theory of modulars that are essential for the rest of the presentation. We refer the reader to \cite{KK, AOJA, M:1983} for a detailed treatment of the ideas merely sketched in this Section.
\begin{definition}\label{def-modular}\cite{M:1983, nakano}
	A convex modular on a real vector space $X$ is a function $\varrho: X \to [0,\infty]$ that satisfies the following conditions:
	\begin{enumerate}
		\item[(1)] $\varrho(x) = 0$ if and only if $x = 0$;
		\item[(2)] $\varrho(\alpha x) = \varrho(x)$, if $|\alpha| =1$;
		\item[(3)] $\varrho(\alpha x + (1-\alpha) y )\leq \alpha\varrho(x) + (1-\alpha)\varrho(y)$, for any $\alpha \in [0,1]$
		and any $x, y \in X$.
	\end{enumerate}
	Moreover, $\varrho$ is said to be left-continuous if, for all $x \in X$,
	$$\lim\limits_{r \to 1^{-}}\ \varrho(rx) = \varrho(x).$$
\end{definition}

\begin{remark}  {\normalfont If the condition (1) is replaced with
		$$\varrho(0) = 0,$$ $\varrho$ is said to be a pseudo-modular (see\cite{M:1983}).}
\end{remark}
Finally, a pseudomodular $\rho$ on a vector space $X$ is said to be uniformly convex ($UC$) if for every $\varepsilon>0$ there exists $\delta=\delta(\varepsilon)>0$ (i.e, $\delta$ is independent of $u,v$) such that  for every $u\in X$ and $v\in X$:
\begin{equation}\label{defunifcom}\rho\left(\frac{u-v}{2}\right) \geq \varepsilon\ \frac{\rho(u)+\rho(v)}{2} \;\; \text{implies}\;\;\;
	\rho\left(\frac{u+v}{2}\right) \leq (1-\delta)\ \frac{\rho(u)+\rho(v)}{2}.
\end{equation}

Theorem \ref{UC-sobolev2} addresses the uniform convexity statement needed in the sequel. Notice that the result is valid even in the case the variable exponent $p(x)$ is unbounded in $\Omega$, that is even if $p_+=\infty.$ We refer the interested reader to \cite{AOJA} for the details of the proof.

\begin{theorem}
	\label{UC-sobolev2} Let $\Omega\subseteq {\mathbb R}^n$ be a domain, $p:\Omega\rightarrow [1,\infty)$ (here for $u\in W^{1,p}(\Omega), u_j=\frac{\partial u}{\partial x_j}$ and $|\cdot|$ stands for the Euclidean norm in ${\mathbb R}^n$).  Consider the functional $\varrho :W^{1,p}(\Omega)\rightarrow [0,\infty]$ defined by
	$$\varrho(u) := \rho_{p}(|\nabla u|)= \int_{\Omega}\frac{1}{p(x)} \left(\sum_1^nu_j^2\right)^{\frac{p(x)}{2}}\ dx = \int_{\Omega}\frac{1}{p(x)}|\nabla u|^{p(x)}\ dx.$$
	Then, if $p_->1$, $\varrho$ is a convex pseudomodular on $W^{1,p}(\Omega)$ and is $(UC)$; in fact, for every $\epsilon>0$ there is $\delta=\delta(\epsilon, p_-)>0$ such that (\ref{defunifcom}) is satisfied. In particular, $\varrho$ is uniformly convex even if $p_+=\infty.$
\end{theorem}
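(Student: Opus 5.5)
The plan is to reduce everything to a pointwise inequality for vectors in ${\mathbb R}^n$ that is uniform in the exponent $q\in[p_-,\infty)$, and then integrate it.

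\textbf{Pseudomodular properties.} Since $t\mapsto t^{q}/q$ is convex and nondecreasing on $[0,\infty)$ for every $q\ge 1$, the map $z\mapsto |z|^{q}/q$ is convex on ${\mathbb R}^n$. Hence $\varrho$ is convex. It is also even, and $\varrho(0)=0$, so it is a convex pseudomodular. It is not a modular, because $\varrho$ vanishes on constants.

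\textbf{Key step: a uniform pointwise estimate.} For each $\epsilon\in(0,1)$ I want $\delta=\delta(\epsilon,p_-)>0$ such that, for every $q\ge p_-$ and all $a,b\in{\mathbb R}^n$,
\[
\Big|\tfrac{a-b}{2}\Big|^{q}\ge \epsilon\,\tfrac{|a|^q+|b|^q}{2}\ \Longrightarrow\ \Big|\tfrac{a+b}{2}\Big|^{q}\le (1-\delta)\tfrac{|a|^q+|b|^q}{2}.
\]
\begin{itemize}
\item By homogeneity, normalize so that $\max(|a|,|b|)=1$.
\item If $q\ge 2$: Clarkson's inequality gives $|\tfrac{a+b}{2}|^q+|\tfrac{a-b}{2}|^q\le \tfrac{|a|^q+|b|^q}{2}$, so one can take $\delta=\epsilon$.
\item If $p_-\le q<2$: the exponent ranges over the compact interval $[p_-,2]$. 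Strict convexity of $|\cdot|^q$ and a compactness argument over $(q,a,b)$ in a compact set give a uniform $\delta$. Equivalently, one can use the standard estimate $|\tfrac{a+b}{2}|^q\le \tfrac{|a|^q+|b|^q}{2}-c(p_-)\,(\dots)$ with constants depending continuously on $q$.
\item The factor $1/q$ multiplies both sides, so it does not affect the inequality.
\end{itemize}

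\textbf{Integration step (Clarkson-type splitting).} The hypothesis only holds in integrated form, $\varrho(\tfrac{u-v}{2})\ge \epsilon\,\tfrac{\varrho(u)+\varrho(v)}{2}$. So I would split $\Omega$ into two sets:
\begin{itemize}
\item $A$: the points where the pointwise hypothesis holds with $\epsilon/2$;
\item $\Omega\setminus A$: its complement.
\end{itemize}
On $\Omega\setminus A$ the pointwise hypothesis fails, so the contribution of $|\tfrac{\nabla u-\nabla v}{2}|^p/p$ there is at most $\tfrac{\epsilon}{2}\cdot\tfrac{\varrho(u)+\varrho(v)}{2}$. Consequently the integral of $\tfrac{|\nabla u|^p+|\nabla v|^p}{2p}$ over $A$ is at least $\tfrac{\epsilon}{2}\cdot\tfrac{\varrho(u)+\varrho(v)}{2}$, because pointwise $|\tfrac{a-b}{2}|^q\le\tfrac{|a|^q+|b|^q}{2}$.

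Then:
\begin{itemize}
\item On $A$, apply the pointwise estimate with $\delta(\epsilon/2,p_-)$.
\item On $\Omega\setminus A$, use plain convexity.
\item Summing gives $\varrho(\tfrac{u+v}{2})\le (1-\delta\epsilon/2)\tfrac{\varrho(u)+\varrho(v)}{2}$.
\end{itemize}
If $\varrho(u)+\varrho(v)=\infty$ the conclusion is trivial.

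\textbf{Main obstacle.} The main obstacle is uniformity of $\delta$ in $q$ as $q\to\infty$ (the case $p_+=\infty$). This is exactly why I handle $q\ge2$ by Clarkson's inequality, whose constant is independent of $q$, rather than by compactness. The compactness argument is then needed only on $[p_-,2]$, where the range of exponents is bounded.
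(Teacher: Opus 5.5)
Your proof is correct. The paper gives no argument of its own for this theorem; it simply refers to \cite{AOJA}. So your proposal is a self-contained proof rather than a variant of one in the text.

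Your route has two steps. First, a pointwise uniform-convexity estimate for $z\mapsto |z|^q/q$ on ${\mathbb R}^n$, uniform in $q\in[p_-,\infty)$. Second, the standard splitting of $\Omega$ that upgrades pointwise uniform convexity to modular uniform convexity. This isolates exactly why $p_+=\infty$ is harmless:
\begin{itemize}
\item for $q\ge 2$, the Hilbert-space Clarkson inequality (parallelogram law, then superadditivity and convexity of $t\mapsto t^{q/2}$) gives $\delta=\epsilon$ independently of $q$;
\item so only the bounded range $[p_-,2]$ needs compactness, and there $p_->1$ is what keeps $\delta>0$.
\end{itemize}

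Your integration bookkeeping is also right. Set $f=\tfrac1p|\tfrac{\nabla u-\nabla v}{2}|^p$ and $g=\tfrac{1}{2p}(|\nabla u|^p+|\nabla v|^p)$. The bound $\int_{\Omega\setminus A}f\le\tfrac{\epsilon}{2}\int_\Omega g$ together with $f\le g$ pointwise gives $\int_A g\ge\tfrac{\epsilon}{2}\int_\Omega g$. Hence $\varrho(\tfrac{u+v}{2})\le\bigl(1-\tfrac{\epsilon}{2}\,\delta(\tfrac{\epsilon}{2},p_-)\bigr)\tfrac{\varrho(u)+\varrho(v)}{2}$, and the case $\varrho(u)+\varrho(v)=\infty$ is trivial, as you say.

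One small refinement: as written, your compactness argument runs over $a,b\in{\mathbb R}^n$, so a priori $\delta$ also depends on $n$. All quantities involved depend only on $|a|$, $|b|$ and $a\cdot b$, so you can run the argument in ${\mathbb R}^2$. That gives $\delta=\delta(\epsilon,p_-)$ exactly as stated.

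The compactness step leaves $\delta$ non-explicit on $[p_-,2]$. The quantitative strict-convexity inequality you mention as an alternative would make it explicit, but the theorem does not need that.
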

\begin{proof}
	See\cite{AOJA} for the proof.
\end{proof}
\section{Modular topologies and subspaces}\label{modulartopologies}

On the set ${\mathcal M}$ of extended-real-valued Borel-measurable functions defined on $\Omega$, consider the functional $\rho_{p}:{\mathcal M}\rightarrow [0,\infty]$ defined by
\begin{equation}\label{def-rho}
	\rho_{p}(u):=\int_{\Omega}\frac{|u(x)|^{p(x)}}{p(x)}\ dx.
\end{equation}
Then, $\rho_p$ is a left-continuous modular on the vector space
$L^{p}(\Omega)$ and in fact, the following holds:
\begin{lemma}
	Let $\Omega\subseteq {\mathbb R}^n$ and $p:\Omega\rightarrow [1,\infty)$ be measurable. Then
	\begin{align}\nonumber
		L^p(\Omega)&=\{v\in {\mathcal M}: \rho_p(\lambda v)<\infty\,\,\text{for some}\,\lambda>0\}\\ \nonumber &=\{v\in {\mathcal M}: \int\limits_{\Omega}\left|\lambda v\right|^{p}\,dx<\infty\,\,\text{for some}\,\lambda>0\}.
	\end{align}
\end{lemma}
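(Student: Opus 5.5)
The lemma asserts that three sets coincide: $L^p(\Omega)$ as given in Definition \ref{deflp}, which I will call $A$; the set $B=\{v\in{\mathcal M}:\rho_p(\lambda v)<\infty \text{ for some } \lambda>0\}$; and the set $C=\{v\in{\mathcal M}:\int_\Omega|\lambda v|^p\,dx<\infty \text{ for some } \lambda>0\}$. The sets $A$ and $C$ are literally the same by Definition \ref{deflp}, so everything reduces to proving $B=C$. The plan is to compare the two integrands pointwise, rescaling $\lambda$ where needed, and to use nothing beyond the bounds $1\le p(x)<\infty$.

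\emph{Step 1: $C\subseteq B$.} Since $p(x)\ge 1$, we have $\frac{1}{p(x)}|\lambda v(x)|^{p(x)}\le|\lambda v(x)|^{p(x)}$ pointwise. Hence $\rho_p(\lambda v)\le\int_\Omega|\lambda v|^p\,dx<\infty$ with the same $\lambda$.

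\emph{Step 2: $B\subseteq C$.} This is the only place where care is needed, because $p$ may be unbounded, so $1/p$ has no positive lower bound and we cannot simply compare the integrands with a constant. Suppose $\rho_p(\lambda v)<\infty$ and set $\mu=\lambda/2$. Then
\[
|\mu v(x)|^{p(x)}=2^{-p(x)}|\lambda v(x)|^{p(x)}=\frac{p(x)}{2^{p(x)}}\cdot\frac{|\lambda v(x)|^{p(x)}}{p(x)}.
\]
The function $t\mapsto t\,2^{-t}$ is bounded on $[1,\infty)$, with maximum at most $1/2$ (attained at $t=1$ and $t=2$). It follows that $\int_\Omega|\mu v|^p\,dx\le\frac12\rho_p(\lambda v)<\infty$, so $v\in C$.

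\emph{Measurability and conventions.} The integrands are nonnegative and Borel measurable, so all integrals make sense in $[0,\infty]$. If the definition allows $v$ to be extended-real-valued, finiteness of either integral forces $|v|<\infty$ almost everywhere, because $p<\infty$ everywhere. So no issue arises there.

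The main obstacle is Step 2, namely absorbing the factor $1/p$ when $p_+=\infty$. Shrinking $\lambda$ by a fixed factor handles it, because $p\,2^{-p}$ is uniformly bounded regardless of how large $p$ becomes.
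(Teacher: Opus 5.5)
Your proof is correct. It is the standard rescaling argument that the paper defers to \cite{AOJA}; the constant $e^{e^{-1}}$ in Lemma \ref{normuniformlybounded} comes from the same trick with scaling factor $e^{1/e}$ and the bound $t\,e^{-t/e}\le 1$. One small slip: $t\,2^{-t}$ is not bounded by $\tfrac12$ on $[1,\infty)$. Its maximum is $\frac{1}{e\ln 2}\approx 0.53$, attained at $t=1/\ln 2$, but any finite bound suffices, so your conclusion $\int_\Omega|\mu v|^p\,dx\le \frac{1}{e\ln 2}\,\rho_p(\lambda v)<\infty$ still holds.
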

\begin{proof}
	The proof is simple, the reader is referred to \cite{AOJA} for the details.
\end{proof}
\begin{remark}{\normalfont In fact, it can be shown that the Luxemburg norm given in Definition \ref{deflp} is equivalent to the norm 
		\begin{equation*}
			\|f\|_{\ast,p}=\inf\left\{\lambda>0: \int\limits_{\Omega}\frac{1}{p}\left(|f(x)|/\lambda)\right)^{p}dx\leq 1\right\}
		\end{equation*}  
		In other words, the modulars $u\rightarrow \int\limits_{\Omega}|u|^pdx$ and $u\rightarrow \int\limits_{\Omega}\frac{|u|^p}{p}dx$ define the exact same normed space $L^p(\Omega)$ and generate equivalent Luxemburg norms.
		
	}
	
\end{remark}
Likewise, $W^{1,p}(\Omega)$ will stand for the vector subspace of $L^p(\Omega)$ consisting of those functions whose weak derivatives also belong to $L^p(\Omega)$ and it will be endowed with the modular
\begin{align}
	\rho_{1,p}:W^{1,p}(\Omega)\rightarrow [0,\infty]\\ \nonumber
	\rho_{1,p}(u):=\rho_{p}(u)+\rho_p\left(|\nabla u|\right)
\end{align}
The modulars $\rho_p$, $\rho_{1,p}$ introduced above define  Hausdorff topologies in $L^p(\Omega)$ and $W^{1,p}(\Omega)$, respectively. \\
Specifically, a set $O\subseteq W^{1,p}(\Omega)$ is declared to be $\rho_{1,p}$ open iff for any $u\in O$ there is $r>0$ such that the modular ball $\{v\in W^{1,p}(\Omega):\rho_{1,p}(v-u)<r\}$ is contained in $O$. This is equivalent to defining a set $A$ to be closed iff for any sequence $(u_j)\subseteq A$ such that $\rho_{1,p}(u_j-u)\rightarrow 0$, it must hold $u\in A$. The topology induced on $L^p(\Omega)$ by the modular $\rho_p$ is defined analogously.\\	
These topologies will be from now on referred to as the ($\rho_p$) $\rho_{1,p}$ topology, respectively, or the modular topology if there is no room for confusion, and denoted by  ($\tau_{\rho_{p}}$) $\tau_{\rho_{1,p}}$. We refer the reader to \cite{AOJA} for a detailed treatment of the modular topology and simply state the following Theorems that will be used in the sequel. In what follows, the $\tau_{\rho_{1,p}}$-closure of a set $S\subset W^{1,p}(\Omega)$ will be written as $\overline{S}^{\rho_{1,p}}$. It is worth noticing that modular balls are generally {\it not} open in the modular topology.\\
Theorems \ref{basic-properties}-\ref{pqmodularembedding1} were proved in \cite{AOJA} and will be essential in the sequel.
\begin{theorem}\label{basic-properties} The following properties hold:
	\begin{enumerate}
		\item If $U$ is a $\rho_{1,p}$-open subset of $W^{1,p}(\Omega)$, then, for any $v\in W^{1,p}(\Omega)$ the set $U + v = \{u+v; u \in U\}$ is also $\rho_{1,p}$-open.  Hence $U+V$ is $\rho_{1,p}$-open provided either $U$ or $V$ is $\rho_{1,p}$-open.
		\item If $U$ is $\varrho$-open and $\theta \geq 1$, then $\theta U$ is also $\rho_{1,p}$-open.
		\item For any $v \in \overline{A}^{\rho_{1,p}}$ and any $\rho_{1,p}$-open set $U$ such that $v \in U$, then $U\cap A \neq \emptyset$.
		\item If $A$ is convex, then $\overline{A}^{\rho_{1,p}}$ is convex.
	\end{enumerate}
\end{theorem}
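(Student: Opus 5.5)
We prove the four items directly from the definition of $\tau_{\rho_{1,p}}$: a set is open when it contains a modular ball $\{w:\rho_{1,p}(w-u)<r\}$ around each of its points, and a set is closed when it contains the limits of its modularly convergent sequences. Two facts about $\rho_{1,p}$ carry the argument. It is invariant under translation of the difference, and it is convex with $\rho_{1,p}(0)=0$, which gives $\rho_{1,p}(tw)\le t\rho_{1,p}(w)$ for $t\in[0,1]$.

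\emph{Item (1).} Let $U$ be open and $w\in U+v$, so $w-v\in U$. Choose $r>0$ with $\{z:\rho_{1,p}(z-(w-v))<r\}\subseteq U$. If $\rho_{1,p}(y-w)<r$, then $\rho_{1,p}((y-v)-(w-v))<r$, so $y-v\in U$ and $y\in U+v$. For the second claim, $U+V=\bigcup_{v\in V}(U+v)$ is a union of open sets, and a union of open sets is open because the ball condition is pointwise.

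\emph{Item (2).} Let $w=\theta u\in\theta U$ with $\{z:\rho_{1,p}(z-u)<r\}\subseteq U$. If $\rho_{1,p}(y-w)<r$, then since $1/\theta\le1$,
\[
\rho_{1,p}(y/\theta-u)=\rho_{1,p}\bigl(\theta^{-1}(y-w)\bigr)\le\theta^{-1}\rho_{1,p}(y-w)<r .
\]
So $y/\theta\in U$ and $y\in\theta U$. The hypothesis $\theta\ge1$ is used exactly in this inequality.

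\emph{Item (3).} Suppose $U\cap A=\emptyset$. Then $A\subseteq U^c$, and $U^c$ is closed. This needs the equivalence, stated in the text, between open sets and complements of sequentially closed sets, which I take as given. Hence $\overline{A}^{\rho_{1,p}}\subseteq U^c$, contradicting $v\in U\cap\overline{A}^{\rho_{1,p}}$.

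\emph{Item (4).} This is the delicate step, because $\overline{A}^{\rho_{1,p}}$ is the smallest closed set containing $A$, and it need not consist only of limits of sequences from $A$. Write $C=\overline{A}^{\rho_{1,p}}$ and fix $\alpha\in[0,1]$. We show $\alpha C+(1-\alpha)C\subseteq C$ in two stages.
\begin{enumerate}
\item Fix $a\in A$ and set $B_a=\{x:\alpha x+(1-\alpha)a\in C\}$. By convexity of $A$, $A\subseteq B_a$. The set $B_a$ is closed: if $\rho_{1,p}(x_k-x)\to0$, then $\rho_{1,p}\bigl((\alpha x_k+(1-\alpha)a)-(\alpha x+(1-\alpha)a)\bigr)\le\alpha\rho_{1,p}(x_k-x)\to0$, and $C$ is closed. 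Therefore $C\subseteq B_a$.
\item Now fix $c\in C$ and set $D_c=\{y:\alpha c+(1-\alpha)y\in C\}$. Stage 1 gives $A\subseteq D_c$. The same estimate, with factor $1-\alpha$, shows $D_c$ is closed. Therefore $C\subseteq D_c$.
\end{enumerate}
Together these give convexity of $C$.

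The main obstacle is this two-stage argument in item (4): we avoid using sequences in $C$ and instead use only that preimages of closed sets under the maps $x\mapsto\alpha x+\text{const}$ are closed. The rest is bookkeeping with the inequality $\rho_{1,p}(tw)\le t\rho_{1,p}(w)$.
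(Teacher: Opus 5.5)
Your proof is correct. In (4) you rightly avoid the tempting shortcut of taking sequences from $A$ that converge to $u$ and $v$: without a $\Delta_2$-type condition (e.g.\ when $p_+=\infty$), the $\tau_{\rho_{1,p}}$-closure can be larger than the set of modular limits of sequences from $A$.

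The paper gives no argument of its own here, only a pointer to \cite[Section 4]{AOJA}. So your write-up is a self-contained substitute rather than a variant of something in the text.

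The way the statement is arranged suggests that (1)--(3) are meant as lemmas for (4), and they support an open-set dual of your argument. Take $u,v\in\overline{A}^{\rho_{1,p}}$, $\alpha\in(0,1)$ and $w=\alpha u+(1-\alpha)v\notin\overline{A}^{\rho_{1,p}}$, and let $U$ be the complement of $\overline{A}^{\rho_{1,p}}$. By (1) and (2) the set $\alpha^{-1}\bigl(U-(1-\alpha)v\bigr)$ is open and contains $u$, so by (3) it contains some $a\in A$. Then $(1-\alpha)^{-1}(U-\alpha a)$ is open and contains $v$, so it contains some $b\in A$. The point $\alpha a+(1-\alpha)b$ then lies both in $U$ and, by convexity, in $A$, which is impossible.

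Your closed sets $B_a$ and $D_c$ are exactly the complements of such translated and dilated open sets, and both proofs upgrade one slot at a time from $A$ to the closure. Your version relies on two facts: the sequential characterization of closed sets in both directions (true, and a two-line check), and sequential continuity of $x\mapsto\alpha x+b$ for $\alpha\in[0,1]$. The open-set version uses only the ball definition. It also explains why (1)--(3), and the restriction $\theta\ge 1$ in (2), appear in the statement at all.

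As a small aside, (3) needs no sequential characterization: the complement of an open set is closed by definition of the topology.
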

\begin{proof}
	See \cite[Section 4]{AOJA}.
\end{proof}
\begin{theorem}\label{subspace}  Let $A$ be a vector subspace of $W^{1,p}(\Omega)$.  Then $\overline{A}^{\varrho}$ is a $\varrho$-closed vector subspace of $W^{1,p}(\Omega)$.
\end{theorem}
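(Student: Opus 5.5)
We must show that if $A$ is a vector subspace of $W^{1,p}(\Omega)$, then $\overline{A}^{\varrho}$ (the modular closure, with $\varrho=\rho_{1,p}$) is a closed vector subspace. Closedness is automatic, provided the closure operator is idempotent. The work is in proving stability under sums and scalar multiples. We cannot simply pass to limits of sequences, since the modular topology need not be first countable, and modular convergence is not additive: $\rho(u_j+v_j-u-v)$ is not controlled by $\rho(u_j-u)+\rho(v_j-v)$. We can, however, use convexity, $\rho\big(\tfrac{x+y}{2}\big)\le \tfrac12\rho(x)+\tfrac12\rho(y)$. So the plan is to work with the topological characterization in Theorem \ref{basic-properties}(3) together with translation and dilation invariance of open sets, items (1) and (2).

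\textbf{Closedness.} The closure $\overline{A}^{\varrho}$ is, by definition, the intersection of all $\tau_{\rho_{1,p}}$-closed sets containing $A$, since $\tau_{\rho_{1,p}}$ is a topology. Hence it is closed.

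\textbf{Convexity and balance.} Item (4) of Theorem \ref{basic-properties} gives convexity of $\overline{A}^{\varrho}$, since a subspace is convex. Property (2) of a modular shows $\varrho(-x)=\varrho(x)$. Consequently the map $x\mapsto -x$ sends modular balls to modular balls and open sets to open sets, so it is a homeomorphism fixing $A$, and $\overline{A}^{\varrho}=-\overline{A}^{\varrho}$. Also $0\in A\subset\overline{A}^{\varrho}$.

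\textbf{Dilations $\theta\ge1$.} Take $v\in\overline{A}^{\varrho}$ and let $U$ be an open set containing $\theta v$. Then $\theta^{-1}U$ contains $v$. Openness of $\theta^{-1}U$ for $\theta^{-1}\le 1$ follows from convexity: $\varrho(\theta^{-1}w)\le\theta^{-1}\varrho(w)$, so if $\varrho(w-\theta u_0)<r$ then $\varrho(\theta^{-1}w-u_0)\le \theta^{-1} r$. Concretely, $\{y:\varrho(y-u_0)<r/\theta\}\subset \theta^{-1}\{w:\varrho(w-\theta u_0)<r\}$ follows from $\varrho(\theta y-\theta u_0)$. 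This bound is the wrong direction, so instead I will use the alternative route: show that $\theta^{-1}$ maps closed sets into closed sets, i.e. that $x\mapsto\theta x$ is continuous. If $\varrho(x_k-x)\to0$, we need $\varrho(\theta x_k-\theta x)\to0$. This fails in general without a $\Delta_2$ condition, and I expect this to be the main obstacle. The way around it is to avoid scaling up entirely. For scalar multiples $\lambda v$ with $|\lambda|\le1$, write $\lambda v=|\lambda| (\pm v)+(1-|\lambda|)\,0$, a convex combination of points of the convex set $\overline{A}^{\varrho}$, which contains $0$ and $\pm v$. For $|\lambda|>1$, take an open $U\ni\lambda v$. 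By item (2), since $|\lambda|\ge 1$ the set $|\lambda|^{-1}$-preimage is handled as follows: for any open $W\ni v$, the set $\lambda W$ is open by item (2) and contains $\lambda v$. I need the converse, so I will verify directly from the definition of openness that $\lambda^{-1}U$ is open, using the existence of a ball of radius $r$ about $\lambda u$ inside $U$ and the estimate $\varrho(\lambda(y-u))\ge|\lambda|\varrho(y-u)$. That estimate goes the wrong way too, so if this fails I would pass to the convex-hull argument below.

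\textbf{Sums and the fallback.} For $u,v\in\overline{A}^{\varrho}$, convexity gives $\tfrac{u+v}{2}\in\overline{A}^{\varrho}$. Thus everything reduces to showing $2w\in\overline{A}^{\varrho}$ whenever $w\in \overline{A}^{\varrho}$, i.e. closure under doubling. The fallback uses the sequential description of closed sets: the set $B=\{x:\tfrac12 x\in\overline{A}^{\varrho}\}=2\overline{A}^{\varrho}$ contains $A$ (because $\tfrac12 A=A$). I will show $B$ is closed. If $x_k\in B$ and $\varrho(x_k-x)\to0$, then $\varrho(\tfrac12x_k-\tfrac12x)\le\tfrac12\varrho(x_k-x)\to0$ by convexity, since $\varrho(0)=0$. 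Hence $\tfrac12x\in\overline{A}^{\varrho}$ because that set is closed, so $x\in B$. Therefore $\overline{A}^{\varrho}\subset B$, i.e. $2\overline{A}^{\varrho}\subset\overline{A}^{\varrho}$. Iterating and combining with the $|\lambda|\le1$ case gives closure under all scalars. Together with midpoints, this gives closure under sums. This argument (halving is sequentially continuous by convexity) settles the main obstacle.
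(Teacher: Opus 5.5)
You prove the trivial inclusion, not the one you need. With $B=\{x:\tfrac12 x\in\overline{A}^{\varrho}\}=2\overline{A}^{\varrho}$, your argument ($B$ closed, $A\subset B$, hence $\overline{A}^{\varrho}\subset B$) says that $\tfrac12 x\in\overline{A}^{\varrho}$ for every $x\in\overline{A}^{\varrho}$. That is, it proves $\tfrac12\overline{A}^{\varrho}\subset\overline{A}^{\varrho}$, equivalently $\overline{A}^{\varrho}\subset 2\overline{A}^{\varrho}$. This already follows from convexity and $0\in\overline{A}^{\varrho}$, and it is not $2\overline{A}^{\varrho}\subset\overline{A}^{\varrho}$.

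The method cannot give more. Continuity of a map $f$ only yields $f(\overline{S})\subset\overline{f(S)}$, so continuity of halving only shows that halving preserves $\overline{A}^{\varrho}$. To push doubling through the closure you would need doubling to be continuous. As you suspected, it is not when $p_+=\infty$. For $p(x)=1+1/x$ on $(0,1)$ and $u_k(x)=(1/k-x)^+$, one has $\rho_{1,p}(u_k)\le\int_0^{1/k}2/p(x)\,dx\to0$. Yet $\rho_{1,p}(2u_k)\ge\int_0^{1/k}2^{p(x)}/p(x)\,dx=\infty$ for every $k$. Your dilation paragraph also ends with both attempts retracted. So closure under sums and under scalars with $|\lambda|>1$ is not established.

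The missing idea is translation invariance, which you list in your plan but never use. Since $\varrho((x_k+c)-(x+c))=\varrho(x_k-x)$, every translation $x\mapsto x+c$ is a homeomorphism of $\tau_{\rho_{1,p}}$ (item (1) of Theorem \ref{basic-properties}). Separate continuity of addition is all you need, even though modular convergence is not additive.

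\begin{enumerate}
\item For $a\in A$ you get $\overline{A}^{\varrho}+a=\overline{A+a}^{\varrho}=\overline{A}^{\varrho}$.
\item For fixed $u\in\overline{A}^{\varrho}$, the set $\overline{A}^{\varrho}-u$ is closed. By step 1 it contains $A$, hence it contains $\overline{A}^{\varrho}$.
\item Thus $\overline{A}^{\varrho}+\overline{A}^{\varrho}\subset\overline{A}^{\varrho}$, and doubling is just $w+w$.
\item For $|\lambda|>1$, write $\lambda v=\sum_{i=1}^{m}(\lambda/m)v$ with an integer $m\ge|\lambda|$. Each summand lies in $\overline{A}^{\varrho}$ by your $|\lambda|\le 1$ argument.
\end{enumerate}

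With this replacement, the rest of your proposal is correct: closedness of the closure, symmetry under $x\mapsto -x$, convexity via item (4), and the case $|\lambda|\le1$.
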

\begin{proof}
	See \cite[Prop. 4.10]{AOJA}
\end{proof}
Moreover, the modular topology is complete. We make this statement precise in the following theorems.
\begin{theorem}\label{completeness}
	$L^p(\Omega)$ is $\rho_p$-complete, that is, if $(u_j)\subset L^p(\Omega)$ and $\rho_p(u_j-u_k)\rightarrow 0$ as $j,k\rightarrow \infty$, then there exists $u\in L^p(\Omega)$ such that $\rho_p(u_j-u)\rightarrow 0$ as $j\rightarrow \infty$.
\end{theorem}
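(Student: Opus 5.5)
We follow the classical Riesz--Fischer argument, adapted to the modular $\rho_p(u)=\int_\Omega |u|^{p}/p\,dx$, and we never use the norm. Pointwise triangle inequalities are unavailable, since $\rho_p$ is not homogeneous. What replaces them is convexity: for any $f,g$,
\[
\rho_p\left(\tfrac{f+g}{2}\right)\le \tfrac12\rho_p(f)+\tfrac12\rho_p(g).
\]
This doubling is the price we pay, and it must be absorbed by choosing the subsequence carefully.

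\textbf{Step 1: a fast subsequence.} Since $\rho_p(u_j-u_k)\to0$, we choose indices $j_1<j_2<\cdots$ with $\rho_p(u_{j_{k+1}}-u_{j_k})\le 4^{-k}$. The aim is to show that $u_{j_k}$ is Cauchy in measure, and in fact a.e.

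For a fixed $\varepsilon\in(0,1)$, set $E=\{|f|>\varepsilon\}$. On $E$ we have $|f|^{p}/p\ge \varepsilon^{p(x)}/p(x)$. With $p$ unbounded, this lower bound can be very small where $p$ is large, so Chebyshev does not bound $|E|$ uniformly. We handle this by localizing. Let $\Omega_M=\{p\le M\}$. On $\Omega_M$,
\[
|\{x\in\Omega_M: |f|>\varepsilon\}|\le M\varepsilon^{-M}\rho_p(f).
\]
Hence on each $\Omega_M$ the sequence $(u_j)$ is Cauchy in measure. Since $p<\infty$ everywhere, $\Omega=\bigcup_M\Omega_M$ up to null sets.

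On each $\Omega_M$, Riesz's theorem gives an a.e. convergent subsequence. A diagonal argument over $M\in\mathbb N$ then produces a single subsequence, still denoted $u_{j_k}$, and a measurable $u$ with $u_{j_k}\to u$ a.e. in $\Omega$.

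\textbf{Step 2: modular convergence via Fatou.} Fix $\eta>0$ and pick $N$ such that $\rho_p(u_j-u_k)<\eta$ for $j,k\ge N$. For $j\ge N$, we have $|u_j-u_{j_k}|^{p}/p\to |u_j-u|^{p}/p$ a.e. as $k\to\infty$. Fatou's lemma then gives
\[
\rho_p(u_j-u)\le\liminf_{k}\rho_p(u_j-u_{j_k})\le\eta .
\]
Hence $\rho_p(u_j-u)\to0$ along the whole sequence. This step needs no subsequence bookkeeping, because Fatou works directly with the modular; with it, Step 1 requires only a.e. convergence, and the fast subsequence is in fact unnecessary.

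\textbf{Step 3: $u\in L^p(\Omega)$.} Fix $j$ with $\rho_p(u_j-u)<\infty$. Since $u_j\in L^p$, there is $\lambda\in(0,1]$ with $\rho_p(\lambda u_j)<\infty$. Convexity then gives
\[
\rho_p\left(\tfrac{\lambda}{2}u\right)\le \tfrac12\rho_p(\lambda u_j)+\tfrac12\rho_p\bigl(\lambda(u-u_j)\bigr)\le \tfrac12\rho_p(\lambda u_j)+\tfrac{\lambda}{2}\rho_p(u-u_j)<\infty .
\]
Here we used $\rho_p(\lambda g)\le\lambda\rho_p(g)$ for $\lambda\le1$, which follows from convexity and $\rho_p(0)=0$. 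So $u\in L^p(\Omega)$ by the lemma characterizing $L^p$ through $\rho_p$.

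\textbf{Main obstacle.} The main difficulty is Step 1, extracting an a.e. limit when $p_+=\infty$. The plan is the exhaustion by the sets $\{p\le M\}$ together with the diagonal argument. A subtle point is measurability of $p$ and that $p(x)<\infty$ a.e., both of which are given.

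If one prefers to avoid the diagonal argument, an alternative is to use $\varepsilon^{p}\ge\varepsilon^{p}$ with $\varepsilon$ replaced by $1$. On $\{|f|>1\}$ we have $|f|^p/p\ge 1/p$, and $1/p$ is integrable and positive. Absolute continuity of the measure $dx/p$ with respect to Lebesgue measure then yields convergence in measure after the rescaling $f\mapsto f/\varepsilon$, which is controlled by convexity only up to constants. For this reason the localization is the cleaner route.
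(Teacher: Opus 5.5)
Your proof is correct. The paper gives no argument of its own here; it only refers to Musielak's monograph, where completeness under modular convergence is obtained for general generalized Orlicz (Musielak--Orlicz) spaces, with an arbitrary $\varphi$-function $\varphi(x,t)$ in place of $t^{p(x)}/p(x)$.

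Your argument is a self-contained specialization of the standard proof of that general fact:
\begin{itemize}
\item positivity of the integrand makes a modular Cauchy sequence Cauchy in measure, which you obtain from the Chebyshev bound $|\{x\in\Omega_M:|f|>\varepsilon\}|\le M\varepsilon^{-M}\rho_p(f)$ on the exhausting sets $\Omega_M=\{p\le M\}$;
\item Riesz's theorem plus a diagonal argument gives an a.e.\ limit;
\item Fatou's lemma upgrades this to $\rho_p(u_j-u)\to 0$, and convexity then shows $u\in L^p(\Omega)$.
\end{itemize}
The citation buys generality and brevity. Your version shows where $p_+=\infty$ enters: only in the passage from modular Cauchy to Cauchy in measure. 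The limit step uses Fatou alone, with no $\Delta_2$-type condition, which is why the result holds for unbounded exponents. Your localization does not even use $|\Omega|<\infty$. The $4^{-k}$ subsequence at the start of Step 1 is never used and can be deleted.

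Your closing aside does not work as written. It is not needed for the main proof, so drop it rather than offer it as an alternative. There are two problems with it.
\begin{itemize}
\item When $p_+=\infty$, convexity gives no bound on $\rho_p(f/\varepsilon)$ in terms of $\rho_p(f)$. For example, take $\Omega=(0,1)$, $p(x)=1/x$ and $f=c\,\mathbf{1}_{(0,\eta)}$ with $\varepsilon<c<1$. Then $\rho_p(f)\le \eta^2/2$, while $\rho_p(f/\varepsilon)=\infty$.
\item The absolute continuity you would need is that of Lebesgue measure with respect to $p^{-1}dx$, not the converse.
\end{itemize}
A correct version needs no rescaling. The measure $\mu_\varepsilon=\varepsilon^{p}p^{-1}dx$ has strictly positive density, and $\mu_\varepsilon(\{|u_j-u_k|>\varepsilon\})\le\rho_p(u_j-u_k)$. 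Since $|\Omega|<\infty$, Lebesgue measure on $\Omega$ is a finite measure absolutely continuous with respect to $\mu_\varepsilon$. So $\mu_\varepsilon(E_{jk})\to 0$ forces $|E_{jk}|\to 0$ for $E_{jk}=\{|u_j-u_k|>\varepsilon\}$, which is convergence in measure directly.
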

\begin{proof}
	See \cite{M:1983}.
\end{proof}
\begin{theorem}\label{completeness1}
	If $(u_j)\subset W^{1,p}(\Omega)$ and $\rho_{1,p}(u_j-u_k)\rightarrow 0$ as $j,k\rightarrow \infty$, then there exists $u\in W^{1,p}(\Omega)$ such that $\rho_{1,p}(u-u_j)\rightarrow 0$ as $j\rightarrow \infty$. Thus, the Sobolev space $W^{1,p}(\Omega)$ is $\rho_{1,p}$-complete. 
\end{theorem}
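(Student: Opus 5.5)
The plan is to reduce Theorem \ref{completeness1} to the Lebesgue case, Theorem \ref{completeness}, applied to the function and to each partial derivative, and then identify the limits through weak derivatives.

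First, assume $\rho_{1,p}(u_j-u_k)\to 0$ as $j,k\to\infty$. Since $\rho_{1,p}(v)=\rho_p(v)+\rho_p(|\nabla v|)$ and both terms are nonnegative, we get $\rho_p(u_j-u_k)\to 0$. Since $|\partial_i v|\le |\nabla v|$ pointwise and $t\mapsto t^{p(x)}/p(x)$ is increasing, we also get $\rho_p(\partial_i u_j-\partial_i u_k)\le \rho_p(|\nabla(u_j-u_k)|)\to 0$ for each $i=1,\dots,n$. Theorem \ref{completeness} then produces $u, g_1,\dots,g_n\in L^p(\Omega)$ with $\rho_p(u_j-u)\to 0$ and $\rho_p(\partial_i u_j-g_i)\to 0$.

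The main step is to show that $g_i$ is the weak derivative $\partial_i u$. I would deduce convergence in $L^1_{loc}$, in fact in $L^1(\Omega)$ since $\Omega$ is bounded, from modular convergence. Put $f_j=u_j-u$. On $\{|f_j|\le 1\}$ we have $|f_j|\le 1$, and on $\{|f_j|>1\}$ we have $|f_j|\le |f_j|^{p(x)}\le p(x)\,\frac{|f_j|^{p(x)}}{p(x)}$. Because $p$ may be unbounded, the factor $p(x)$ is the obstacle, so I would avoid the raw $L^1$ estimate. Instead I would use convergence in measure: by Chebyshev, $|\{|f_j|>\varepsilon\}|\to 0$, because $|f_j|^{p}/p\ge \varepsilon^{p}/p$ there and $\inf_x \varepsilon^{p(x)}/p(x)>0$ for fixed $\varepsilon$ when $p$ is bounded. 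Since $p$ is not assumed bounded, I would instead use the set where $|f_j|>1$: there $|f_j|^p/p\ge 1/p$, which also degenerates. So the cleaner route is to pass to a subsequence along which $|f_j|^{p(x)}/p(x)\to 0$ a.e. (possible since it tends to $0$ in $L^1$), which gives $f_j\to 0$ a.e.

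To identify the limit I would combine this with a uniform integrability bound. For $|f_j|>1$ we have $|f_j|\le |f_j|^{p}/p\cdot p/|f_j|^{p-1}$, and the factor $p/|f_j|^{p-1}$ is controlled where $|f_j|\ge 2$, say, because $p/2^{p-1}$ is bounded uniformly in $p\ge p_->1$. Hence $\int_{\{|f_j|\ge 2\}}|f_j|\le C(p_-)\rho_p(f_j)\to 0$, while on $\{|f_j|<2\}$ dominated convergence applies along the a.e.-convergent subsequence. This gives $f_j\to 0$ in $L^1(\Omega)$, and the same argument gives $\partial_i u_j\to g_i$ in $L^1(\Omega)$.

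Passing to the limit in $\int u_j\partial_i\phi=-\int \partial_i u_j\,\phi$ for $\phi\in C^\infty_0(\Omega)$ then yields $\partial_i u=g_i$ weakly, so $u\in W^{1,p}(\Omega)$. Finally, $|\nabla(u_j-u)|^{p}\le (n\max_i|\partial_iu_j-g_i|)^{p}$ is unsuitable since $n^{p}$ is unbounded. Instead I would use convexity of the Euclidean norm: $|\nabla(u_j-u)|\le \sum_i|\partial_iu_j-g_i|$, so $\frac1n|\nabla(u_j-u)|$ is at most the average of the $|\partial_iu_j-g_i|$. By convexity, $\rho_p\left(\tfrac1n|\nabla(u_j-u)|\right)\le \frac1n\sum_i\rho_p(\partial_iu_j-g_i)\to 0$. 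To remove the factor $\frac1n$, I would redo the Cauchy argument directly on the vector field. The vector-valued modular $\int |F|^{p}/p$ is complete by the same proof as Theorem \ref{completeness}, so $\nabla u_j\to G$ modularly with $G=(g_1,\dots,g_n)$, which gives $\rho_p(|\nabla(u_j-u)|)\to0$ and hence $\rho_{1,p}(u_j-u)\to0$.
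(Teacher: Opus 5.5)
Your argument is correct. The paper gives no proof of its own here and simply defers to \cite[Theorem 4.22]{AOJA}, so your proposal works as a self-contained replacement rather than a variant of an in-paper argument.

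Your route is to reduce to Theorem \ref{completeness} componentwise, split at the level $|f_j|=2$, use a.e. convergence along a subsequence, and identify the weak derivatives by testing against $C^\infty_0(\Omega)$. The constant in the split is in fact absolute, $\sup_{t\ge 1}t\,2^{1-t}=2/(e\ln 2)$, rather than depending on $p_-$. The strength of this route is that it never pulls a multiplicative constant out of the modular, which is what makes it valid when $p_+=\infty$. You correctly identified this as the danger in the last step.

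The one place where you lean on something you have not proved is the final claim that the vector-valued modular is complete ``by the same proof'' as Theorem \ref{completeness}. You do not need that claim, because you have already done the work. Extract a common subsequence $(j_m)$ along which every component converges a.e., so that $\nabla u_{j_m}\to G$ a.e. Fatou's lemma then gives, for every $j\ge N$,
\[
\rho_p(|\nabla u_j-G|)\le\liminf_{m\to\infty}\rho_p(|\nabla(u_j-u_{j_m})|)\le\sup_{k,l\ge N}\rho_{1,p}(u_k-u_l),
\]
and the right-hand side tends to $0$ as $N\to\infty$. This closes the proof and makes the detour through $\rho_p\bigl(\tfrac1n|\nabla(u_j-u)|\bigr)$ unnecessary.

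As a by-product, your proof yields a.e. convergence of a subsequence of the gradients. That is exactly the extra information the paper later extracts from \cite[Theorem 4.22]{AOJA} in the proof of Theorem \ref{Dirichletvarphi}, so your argument also covers that use.
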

\begin{proof}
	See \cite[Theorem 4.22]{AOJA}. 
\end{proof}
\begin{theorem}\label{pqmodularembedding1}
	For a bounded domain $\Omega\subset {\mathbb R}^n$ and Borel measurable functions $p,q $ on $\Omega$ with $1\leq q\leq p$ a.e. and such that $\int\limits_{\Omega}\frac{e^{q(x)}}{q(x)}dx<\infty$,  the following statements hold:
	\begin{enumerate}
		\item [(i)] If $(u_j)$ converges to $u$ in $W^{1,p}(\Omega)$, then necessarily $\lim\limits_{j\rightarrow \infty}\rho_{1,q}(u_j - u) = 0$, i.e., $(u_j)$ converges to $u $ in $W^{1,q}(\Omega)$.\\
		\item [(ii)] The inclusions		
		\begin{equation*}
			j_{p,q}: \left(L^{p}(\Omega), \tau_{p}\right)\longrightarrow \left(L^{q}(\Omega), \tau_{q}\right)
		\end{equation*}
		and 
		\begin{equation*}
			i_{p,q}: \left(W^{1,p}(\Omega), \tau_{1,p}\right)\longrightarrow \left(W^{1,q}(\Omega), \tau_{1,q}\right)
		\end{equation*}
		are continuous.
	\end{enumerate}
\end{theorem}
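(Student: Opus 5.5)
The plan is to reduce both statements to a pointwise comparison of the two integrands. The comparison uses $q\le p$ and splits $\Omega$ according to whether the function is small or large at each point. Part (ii) follows from (i) once (i) is known for sequences, because the modular topologies are sequentially defined: a set is closed iff it contains the modular limits of its sequences. It is therefore enough to show that modular convergence in $W^{1,p}(\Omega)$ implies modular convergence in $W^{1,q}(\Omega)$, and similarly in $L^p$. I read the hypothesis of (i) as $\rho_{1,p}(u_j-u)\to 0$. The gradient part is handled exactly like the function part, so I work with a single function $v_j=u_j-u$ (respectively $|\nabla v_j|$) satisfying $\rho_p(v_j)\to0$, and aim to show $\rho_q(v_j)\to 0$.

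Fix a threshold $\eta\in(0,1)$ and split $\Omega=\{|v_j|\le \eta\}\cup\{|v_j|>\eta\}$.

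On the set $\{|v_j|\ge 1\}$, since $q\le p$, we have $|v_j|^{q}\le |v_j|^{p}$ pointwise. Together with $1/q \le (p/q)\cdot(1/p)$, this reduces the bound to controlling $p/q$. That ratio is not bounded in general, so I will instead use Young's inequality in the form
\[
\frac{|v|^{q}}{q}\le \frac{|v|^{p}}{p}+\frac{p-q}{p\,q}\cdot 1 ,
\]
which is convexity of $t\mapsto t^{p/q}$ applied to $|v|^q$. A cleaner route is to use $|v|^q\le |v|^p$ on $\{|v|\ge1\}$ directly and bound $1/q\le 1$, together with $\int |v_j|^p\,dx\le p\text{-weighted}$ quantities. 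To avoid the $p$ weight, I will use Chebyshev instead: $\rho_p(v_j)\to0$ forces $|\{|v_j|>\eta\}|\to 0$. The reason is that $\frac{\eta^{p}}{p}$ is bounded below by a positive constant on sets where $p$ stays bounded, and the set where $p$ is huge has small measure because $\int e^{q}/q<\infty$ and $p\ge q$.

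On the small set $\{|v_j|>\eta\}$, I bound $\frac{|v_j|^q}{q}$ by $\frac{|v_j|^{p}}{p}\cdot\frac{p}{q}|v_j|^{q-p}$. I expect this to be the main obstacle, because $p/q$ may be unbounded where $|v_j|$ is near $1$. To get around it, I will use the elementary inequality
\[
\frac{t^{q}}{q}\le \frac{t^{p}}{p}+\frac{e^{q}}{q}\,\mathbbm{1}_{\{t\le e\}},
\]
or some variant of it obtained by optimizing $t^q/q-t^p/p$ over $t$. This makes $\frac{|v_j|^q}{q}\mathbbm 1_{\{|v_j|>\eta\}}$ dominated by $\frac{|v_j|^p}{p}$ plus an $L^1$ function $C e^{q}/q$ restricted to a set whose measure tends to $0$. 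By absolute continuity of the integral of $e^q/q$, both contributions tend to $0$; this is exactly where the hypothesis $\int_\Omega e^{q}/q<\infty$ is used.

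On $\{|v_j|\le\eta\}$, the integrand $|v_j|^q/q$ is bounded by $\eta^{q}/q\le \eta/q\le\eta$, so this part contributes at most $\eta|\Omega|$. Combining the three estimates gives $\limsup_j\rho_q(v_j)\le \eta|\Omega|$. Letting $\eta\to0$ yields $\rho_q(v_j)\to0$, and hence $\rho_{1,q}(u_j-u)\to0$, which proves (i).

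For (ii), I first observe that the inclusions make sense: $q\le p$ and $\Omega$ is bounded, so $L^p\subset L^q$ by the norm embedding quoted earlier. Continuity then follows from (i), since the preimage of a $\tau_q$-closed set is sequentially $\tau_p$-closed and hence $\tau_p$-closed.
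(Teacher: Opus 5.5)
The paper gives no argument of its own for this statement; it only cites \cite[Theorem 4.20]{AOJA}, so there is no in-text proof to compare against. Once the exploratory false starts are removed, your proposal is a correct, self-contained proof. The three ingredients are:
\begin{enumerate}
\item the pointwise bound $\frac{t^q}{q}\le\frac{t^p}{p}+\frac{e^q}{q}\chi_{\{t<e\}}$ on $\{|v_j|>\eta\}$;
\item the bound $\frac{|v_j|^q}{q}\le\eta$ on $\{|v_j|\le\eta\}$;
\item the fact that $|\{|v_j|>\eta\}|\to 0$.
\end{enumerate}
Together they give $\limsup_j\rho_q(v_j)\le\eta|\Omega|$. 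Part (ii) does follow from (i), since, as noted in Section~\ref{modulartopologies}, the $\tau_\rho$-closed sets are exactly the sets closed under modular limits of sequences. Reading the hypothesis of (i) as $\rho_{1,p}(u_j-u)\to 0$ is also harmless, because norm convergence implies modular convergence.

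Two steps need repair.

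\textbf{The measure of $\{p \text{ large}\}$.} Your reason why this set is small is backwards. The condition $\int_\Omega e^q/q<\infty$ controls $|\{q>M\}|$, and $p\ge q$ only gives $\{q>M\}\subseteq\{p>M\}$, which is the wrong inclusion. The correct reason is that $p$ is finite-valued and $|\Omega|<\infty$, so $|\{p>M\}|\to 0$ as $M\to\infty$. On $\{|v_j|>\eta\}\cap\{p\le M\}$ you have $\frac{|v_j|^p}{p}\ge\frac{\eta^M}{M}$ (here $\eta<1$ is used), hence
\[
|\{|v_j|>\eta\}|\le \frac{M}{\eta^M}\,\rho_p(v_j)+|\{p>M\}| .
\]
Letting $j\to\infty$ and then $M\to\infty$ gives the claim. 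It is finiteness of $p$, not the exponential integrability of $q$, that does the work here.

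\textbf{The elementary inequality.} Commit to it and prove it instead of leaving ``some variant''. For $t\ge e$,
\[
t^{p-q}\ge e^{p-q}\ge 1+(p-q)\ge 1+\frac{p-q}{q}=\frac{p}{q},
\]
using $q\ge 1$, so $\frac{t^q}{q}\le\frac{t^p}{p}$. For $t<e$, trivially $\frac{t^q}{q}\le\frac{e^q}{q}$. This is the only place where $\int_\Omega e^q/q<\infty$ enters, which is worth stating explicitly.

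Finally, delete the Young-inequality bound with remainder $\frac{p-q}{pq}$ and the paragraph on $\{|v_j|\ge 1\}$. That remainder does not tend to zero, and neither piece is used in your final argument.
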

\begin{proof}
	\cite[Theorem 4.20]{AOJA}.
\end{proof}
\begin{definition} \label{defv1p0}
	In the sequel we will write 
	\begin{equation}
		W^{1,p}_{comp}(\Omega):=\{v\in W^{1,p}(\Omega): v\,\text{is compactly supported}\}
	\end{equation}
	and denote the $\rho_{1,p}$-closure of $W^{1,p}_{comp}(\Omega)$, that is $\overline{W_{com}^{1,p}}^{\rho_{1,p}}(\Omega)$, by $U^{1,p}_0(\Omega).$
\end{definition}
\begin{remark}\label{comparisonvu}
	{\normalfont In \cite{AOJA} the authors introduced the space $V^{1,p}_0(\Omega)$ as the modular $\rho_{1,p}$ closure of $C^{\infty}_0(\Omega)$ in $W^{1,p}(\Omega)$. On account of the specific counterexamples in \cite{Harj,Hasto}, in general there holds the strict inclusion
		\begin{equation}
			V^{1,p}_0(\Omega)\subsetneq U^{1,p}_0(\Omega),
		\end{equation}
		even under the assumption $p_+=\sup\limits_{x\in \Omega}p(x)<\infty.$
	}
\end{remark}
\begin{proposition}
	Assume, as in the previous Section that the variable exponent $p\in C(\Omega)$ satisfies $p_- > 1$. Then it holds the inclusion
	\begin{equation*}
		U^{1,p}_0(\Omega)\hookrightarrow W^{1,p_-}_0(\Omega).
	\end{equation*}
\end{proposition}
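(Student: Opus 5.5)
We will show two things. First, $W^{1,p}(\Omega)$ embeds continuously into $W^{1,p_-}(\Omega)$, and modular convergence in $\rho_{1,p}$ implies norm convergence in $W^{1,p_-}(\Omega)$. Second, $W^{1,p}_{comp}(\Omega)\subset W^{1,p_-}_0(\Omega)$. Since $W^{1,p_-}_0(\Omega)$ is norm closed in $W^{1,p_-}(\Omega)$, the modular closure $U^{1,p}_0(\Omega)$ will then land inside it.

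For the first step we use only that $\Omega$ is bounded and that $p\ge p_-$ pointwise. For any measurable $f$, on the set $\{|f|\le 1\}$ we have $|f|^{p_-}\le 1$. On $\{|f|>1\}$ we have $|f|^{p_-}\le |f|^{p(x)}\le p(x)\,|f|^{p(x)}$. Only the first of these two bounds is usable against $\rho_p$, so we argue more carefully as follows. On $\{|f|>1\}$, choose $\lambda\in(0,1)$ small and write
\begin{equation*}
|f|^{p_-}\le C_\lambda\,\frac{|f|^{p(x)}}{p(x)}.
\end{equation*}
To justify this, note that the function $t\mapsto t^{p_-}p\,t^{-p}$ is bounded on $t\ge 2$ uniformly in $p\ge p_-$, because $p\,2^{-(p-p_-)}$ is bounded in $p$. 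The remaining region $1<|f|<2$ is handled by the trivial bound $2^{p_-}$. This gives
\begin{equation*}
\int_\Omega |f|^{p_-}\,dx\le C\bigl(|\Omega|+\rho_p(f)\bigr),
\end{equation*}
and in particular $L^p(\Omega)\hookrightarrow L^{p_-}(\Omega)$.

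For convergence we apply the same pointwise argument to $f_j=u_j-u$ and to $|\nabla(u_j-u)|$ with $\rho_{1,p}(u_j-u)\to0$, but split at a small level $\eta$ instead of $1$. The set $\{|f_j|\le\eta\}$ contributes at most $\eta^{p_-}|\Omega|$. On $\{|f_j|>\eta\}$, the ratio $|f_j|^{p_-}/(|f_j|^{p}/p)$ is bounded by a constant $C(\eta)$ that is uniform in $p\ge p_-$: for large values of $|f_j|$ we use the argument above, and for $|f_j|\in(\eta,2)$ we use $p\,\eta^{p_- -p}$. This last quantity is unbounded as $p\to\infty$ when $\eta<1$, and this is the main obstacle, since it blocks the estimate exactly where $p_+=\infty$.

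We plan to circumvent it as follows. On $\{\eta<|f_j|<1\}$, we have $|f_j|^{p_-}\le |f_j|^{p_-}\mathbbm{1}$. On this set, $\rho_p(f_j)\to0$ forces $\int |f_j|^p/p\to0$, which only yields convergence in measure of $f_j\mathbbm{1}_{\{p\le M\}}$. However, $|f_j|^{p_-}$ is bounded by $1$ there, so convergence in measure together with dominated convergence (using $|\Omega|<\infty$) suffices. Concretely, modular convergence gives convergence in measure of $f_j$ to $0$: for fixed $\eta$, the set $\{|f_j|>\eta\}$ has small measure, because on the portion where $p\le M$ the modular controls it, while the portion where $p>M$ has small measure uniformly in $j$ by taking $M$ large, since $p<\infty$ a.e. Then
\begin{equation*}
\int_{\{|f_j|<2\}} |f_j|^{p_-}\,dx\to0
\end{equation*}
by boundedness and convergence in measure, and the part $\{|f_j|\ge 2\}$ is controlled by $C\rho_p(f_j)\to0$. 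Hence $\|u_j-u\|_{W^{1,p_-}}\to0$.

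For the second step, let $v\in W^{1,p}_{comp}(\Omega)$. By the first step $v\in W^{1,p_-}(\Omega)$, and $v$ has compact support $K\subset\Omega$. Mollifying $v$ with a radius smaller than $\mathrm{dist}(K,\partial\Omega)$ produces $C^\infty_0$ functions converging to $v$ in $W^{1,p_-}$, since the exponent $p_-$ is constant. Hence $v\in W^{1,p_-}_0(\Omega)$. Finally, given $u\in U^{1,p}_0(\Omega)$, we use the characterization of closed sets by sequences to pick $v_j\in W^{1,p}_{comp}(\Omega)$ with $\rho_{1,p}(v_j-u)\to0$. By the first step $v_j\to u$ in $W^{1,p_-}(\Omega)$, and closedness of $W^{1,p_-}_0(\Omega)$ gives $u\in W^{1,p_-}_0(\Omega)$. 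The continuity of the inclusion map itself is the norm estimate $\|u\|_{1,p_-}\lesssim 1+\rho_{1,p}(u)$, which we rescale using the Luxemburg norm to obtain a linear bound.
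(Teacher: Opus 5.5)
\textbf{Steps 1 and 2 are correct.} Step 1 needs one cleanup: discard the stray claim of a constant $C(\eta)$ uniform in $p$, which you then contradict yourself. After that, split at level $2$. On $\{|f_j|\ge 2\}$ the integral is bounded by $\sup_{s\ge p_-}s\,2^{-(s-p_-)}\,\rho_p(f_j)$. On $\{|f_j|<2\}$ the integrand is bounded by $2^{p_-}$ and tends to $0$ in measure. So $\rho_{1,p}(u_j-u)\to0$ does give $u_j\to u$ in $W^{1,p_-}(\Omega)$. This is a self-contained proof of the case $q=p_-$ of Theorem \ref{pqmodularembedding1}, which the paper simply cites.

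\textbf{The gap is in the last step.} There, for $u\in U^{1,p}_0(\Omega)$, you pick $v_j\in W^{1,p}_{comp}(\Omega)$ with $\rho_{1,p}(v_j-u)\to0$. The sequential description of the modular topology says that a set is closed iff it is sequentially closed. It does not say that the closure of a set consists of modular limits of sequences from that set. Showing that this set of limits is itself closed needs a diagonal argument, i.e.\ an inequality $\rho(a+b)\le C(\rho(a)+\rho(b))$. When $p_+=\infty$ the $\Delta_2$ condition fails, and convexity only gives $\rho(a+b)\le\frac12\rho(2a)+\frac12\rho(2b)$.

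The principle is in fact false here, already for $\rho_p$ on $L^p$. Take $\Omega=(0,1)$, $p(x)=1+1/x$, and $A$ the compactly supported elements of $L^p(\Omega)$. Each $h_k=2\mathbbm{1}_{[1/k,1)}+\mathbbm{1}_{(0,1/k)}$ is a $\rho_p$-limit of elements of $A$ (cut off near $0$ and near $1$). Also $\rho_p(2-h_k)\to0$, so $2$ lies in the $\rho_p$-closure of $A$. Yet $\rho_p(2-g)=\infty$ for every $g\in A$, because $g$ vanishes on some $(0,\delta)$ and $\int_0^\delta 2^{p}/p\,dx=\infty$.

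\textbf{The repair uses only what you already proved, and it is exactly the paper's argument.} Let $K=W^{1,p}(\Omega)\cap W^{1,p_-}_0(\Omega)$. Suppose $(w_j)\subset K$ and $\rho_{1,p}(w_j-w)\to0$. Step 1 gives $w_j\to w$ in $W^{1,p_-}(\Omega)$, so $w\in W^{1,p_-}_0(\Omega)$ by norm-closedness. Thus $K$ is sequentially $\rho_{1,p}$-closed, which by the definition of the modular topology means it is $\rho_{1,p}$-closed. By Step 2, $K\supseteq W^{1,p}_{comp}(\Omega)$, so $K$ contains the closure $U^{1,p}_0(\Omega)$. No approximation of individual elements of $U^{1,p}_0(\Omega)$ is needed.

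The paper phrases this as follows: $i_{p,p_-}$ is modularly continuous, so the preimage of a norm-closed set containing $W^{1,p}_{comp}(\Omega)$ is $\rho_{1,p}$-closed. Your Step 2 makes explicit the inclusion $W^{1,p}_{comp}(\Omega)\subseteq W^{1,p_-}_0(\Omega)$, which the paper leaves to the phrase ``since $p_-$ is constant''.
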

\begin{proof}
	Obviously, we have the norm inclusion
	\begin{equation*}
		W^{1,p}(\Omega)\hookrightarrow W^{1,p_-}(\Omega)
	\end{equation*}
	and by virtue of Theorem \ref{pqmodularembedding1}, the inclusion $i_{p,p_-}:W^{1,p}(\Omega) \hookrightarrow W^{1,p_-}(\Omega)$ is modularly continuous. Consequently, if $K$ is a norm-closed subset of $W^{1,p_-}(\Omega)$ (or equivalently, $\rho_{1,p_-}$-closed, since $p_-$ is constant)  that contains $W^{1,p_-}_{comp}(\Omega)$, it follows that $i_{p,p_-}^{-1}(K)$ is $\rho_{1,p}$-closed and $W^{1,p}_{comp}(\Omega) \subseteq i_{pp_-}^{-1}(K)$. Since by definition  $U^{1,p}_0(\Omega)\subseteq K\cap W^{1,p}(\Omega)\subseteq K$, it follows that $U^{1,p}_0(\Omega)$ is contained in any norm-closed subset of $W^{1,p_-}(\Omega)$ that contains $W^{1,p}_{comp}(\Omega)$. Therefore since $p_-$ is constant,
	\begin{equation}\label{inclusionv1pw1p-}
		U^{1,p}_0(\Omega)\subseteq W^{1,p_-}_0(\Omega).
	\end{equation}
\end{proof}
\section{The Dirichlet integral on $	U^{1,p}_0(\Omega)$}\label{dirichletintegral}
In this Section it assumed that $\Omega\subset {\mathbb R}^n$ is bounded and that $p:\Omega\rightarrow (1,\infty)$ is continuous and satisfies $p_-=\inf\limits_{x\in \Omega}p(x)>n.$ 

\begin{lemma}\label{mmain}
	Any sequence $(v_j)\subset U^{1,p}_0(\Omega)$, with $(\nabla v_j)$ $\rho_p$-Cauchy in $\left(L^p(\Omega)\right)^n$, must $\rho_p$-converge in $L^p(\Omega)$ to a function $v\in U^{1,p}_0(\Omega).$ Therefore, $(v_j)$ must $\rho_{1,p}$ converge to $v\in U^{1,p}_0(\Omega)$ in $W^{1,p}(\Omega)$.
\end{lemma}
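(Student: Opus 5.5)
The plan is to pass through the constant exponent $p_->n$, where Poincaré and Morrey inequalities are available, and then return to the variable-exponent modular by a dominated-convergence argument. The starting point is that $\rho_p$-Cauchy for $(\nabla v_j)$ means $\rho_p(\nabla v_j-\nabla v_k)\to 0$. We assume $\Omega$ bounded and $p_->n$.

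First, Theorem \ref{completeness} (applied componentwise, or to the vector modular) gives a limit $G\in (L^p(\Omega))^n$ with $\rho_p(\nabla v_j-G)\to0$.

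Second, we move to the constant exponent $p_-$. Theorem \ref{pqmodularembedding1} with $q=p_-$ is applicable, since $e^{p_-}/p_-$ is integrable on bounded $\Omega$. Alternatively, one can argue directly: on the set where $|f|\le1$ we have $|f|^{p_-}\le 1$, and elsewhere $|f|^{p_-}\le|f|^{p}$; together with the fact that modular convergence forces convergence in measure, this shows that $(\nabla v_j)$ is Cauchy in $L^{p_-}$. By the preceding Proposition, $v_j\in U^{1,p}_0(\Omega)\subset W^{1,p_-}_0(\Omega)$. The Poincaré inequality on $W^{1,p_-}_0(\Omega)$ then makes $(v_j)$ Cauchy in $W^{1,p_-}_0(\Omega)$, so $v_j\to v$ in $W^{1,p_-}_0$ with $\nabla v=G$.

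Third, since $p_->n$, Morrey's inequality gives $\|v_j-v\|_\infty\le C\|\nabla v_j-\nabla v\|_{p_-}\to0$. This uniform convergence is the key step for the zero-order part of the modular: it yields
\[
\rho_p(v_j-v)=\int_\Omega \frac{|v_j-v|^{p(x)}}{p(x)}\,dx\to 0 .
\]
Indeed, once $\|v_j-v\|_\infty<1$ we have $|v_j-v|^{p(x)}\le \|v_j-v\|_\infty^{p_-}$, and $\Omega$ is bounded. This holds even when $p_+=\infty$, which is exactly why uniform convergence rather than $L^{p_-}$ convergence is needed. We also get $v\in L^p$, since $v$ is bounded, and $|\nabla v|=|G|\in L^p$, so $v\in W^{1,p}(\Omega)$. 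Combining these, $\rho_{1,p}(v_j-v)\le \rho_p(v_j-v)+\rho_p(|\nabla v_j-\nabla v|)\to0$.

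Finally, $U^{1,p}_0(\Omega)$ is $\rho_{1,p}$-closed, being by definition a modular closure (see Theorem \ref{subspace}). Since $v_j\in U^{1,p}_0$ and $v_j\to v$ in $\rho_{1,p}$, it follows that $v\in U^{1,p}_0(\Omega)$.

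The main obstacle is identifying the $\rho_p$-limit $G$ of the gradients with the $L^{p_-}$-limit of $\nabla v_j$, together with the modular-to-$L^{p_-}$ passage. I would handle both at once by extracting an a.e.-convergent subsequence, so that the two limits agree a.e. Uniqueness of limits then gives convergence of the full sequence.
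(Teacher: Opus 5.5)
Your proposal is correct and follows essentially the paper's route: embed into $W^{1,p_-}_0$, use Poincar\'e and Morrey to get uniform convergence (which controls the zero-order part of $\rho_p$ even when $p_+=\infty$), then use the modular closedness of $U^{1,p}_0$. The only real difference is that you construct the limit explicitly rather than showing $(v_j)$ is $\rho_{1,p}$-Cauchy and invoking Theorem \ref{completeness1}, which also lets you apply Theorem \ref{pqmodularembedding1} to a convergent rather than merely Cauchy sequence; one small caveat is that your ``direct'' alternative for the passage to $L^{p_-}$ needs the weighted Young bound $|f|^{p_-}\le \frac{p_-}{p}|f|^{p}+1$ rather than $|f|^{p_-}\le |f|^{p}$, since with $p$ unbounded $\int|f|^{p}$ is not controlled by $\rho_p(f)$.
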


\begin{proof}
	Let the sequence $(v_j)\subset U^{1,p}_0(\Omega)$ be chosen in such a way that $(\nabla v_j)$ is $\rho_p$-Cauchy in $\left(L^p(\Omega)\right)^n$. On account of the modular continuity of the inclusion $L^p(\Omega)\subseteq L^{p_-}(\Omega)$,  $(\nabla v_j)$ is $\rho_{p_-}$-Cauchy in $\left(L^{p_-}(\Omega)\right)^n$ and by virtue of (\ref{inclusionv1pw1p-}), the continuity of the Sobolev embedding
	\begin{equation*}
		W^{1,p_-}(\Omega)\hookrightarrow C(\overline{\Omega})
	\end{equation*}
	and the Poincar\'{e} inequality on $W^{1,p_-}_0(\Omega)$, $(v_j)$ is a Cauchy sequence in $W^{1,p_-}_0(\Omega)$ and for any subindices $i,j$, one has (for some constant $C_i(p_-,\Omega)>0$, $i=1,2$):
	\begin{equation*}
		\|v_j-v_i\|_{C(\overline{\Omega})} \leq C_1(p_-,\Omega) \, \| (v_j-v_i)\|_{W^{1,p_-}(\Omega)} \leq C_2(p_-,\Omega) \, \||\nabla (v_j-v_i)|\|_{L^{p_-}(\Omega)}.
	\end{equation*}
	It follows then that
	\begin{equation*}
		\lim_{i,j \to \infty}\, \rho_p(v_j-v_i) = \lim_{i,j \to \infty}\, \int_{\Omega}\frac{|v_j-v_i|^{p}}{p} \, dx = 0.
	\end{equation*}
	By assumption, one then has
	\begin{equation*}
		\lim_{i,j \to \infty}\, \left(\int_{\Omega} \frac{|v_j-v_i|^p}{p}\, dx + \int_{\Omega} \frac{|\nabla (v_j-v_i)|^p}{p}\, dx\right) =  0,
	\end{equation*}
	and taking into account the fact that $U^{1,p}_0(\Omega)$ is a modularly closed subspace of $W^{1,p}(\Omega)$, it follows that the sequence $(v_j)$ converges modularly in $W^{1,p}(\Omega)$ to a function $v$ in $U^{1,p}_0(\Omega).$  
\end{proof}

\begin{theorem}\label{Dirichletvarphi}
	Let $\Omega \subseteq {\mathbb R}^n$ be a $C^{2}$, bounded domain with boundary $\partial\Omega$, $p\in C({\Omega})$, $p_->n$ (hence the possibility $p_+=\infty$ is admissible). Then for any $\varphi\in W^{1,p}(\Omega)$ such that  $\int\limits_{\Omega}\frac{|\nabla \varphi|^p}{p}dx<\infty$, there exists a unique weak solution $u\in W^{1,p}(\Omega)$, of the Dirichlet problem
	\begin{equation}\label{DiPr}
		\begin{cases}
			\Delta_p u=0,\\
			u|_{\partial \Omega}=\varphi
		\end{cases}
	\end{equation}
	that satisfies the condition
	$\displaystyle \int_{\Omega}(p(x))^{-1}\ |\nabla u(x)|^{p(x)}\, dx < \infty$
	and such that the inequality
	\begin{equation}\label{additionalcondition}
		\int_{\Omega}\ |\nabla u(x)|^{p(x)-2}\ \nabla u(x)\ \nabla (u+v-\varphi)(x)\, dx \leq 0
	\end{equation}
	holds for every $v\in U^{1,p(\cdot)}_0(\Omega)$ such that $\displaystyle \int_{\Omega}(p(x))^{-1}\ |\nabla (v-\varphi)(x)|^{p(x)}\, dx < \infty.$
	The weak formulation of (\ref{DiPr}) reads
	\begin{equation}\label{weakformulation}
		\int\limits_{\Omega}|\nabla u|^{p-2}\nabla u \nabla \phi \, dx=0
	\end{equation}
	for all $\phi\in W_{com}^{1,p}(\Omega)$. The boundary condition is to be interpreted as $u-\varphi\in U^{1,p}_0(\Omega).$
\end{theorem}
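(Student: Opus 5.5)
The plan is to obtain $u$ by the direct method, minimizing the Dirichlet integral $\varrho(w)=\int_\Omega p^{-1}|\nabla w|^{p}\,dx$ over the affine class
\[
\mathcal{K}=\Big\{w=\varphi+v:\ v\in U^{1,p}_0(\Omega),\ \textstyle\int_\Omega p^{-1}|\nabla(v+\varphi)|^{p}dx<\infty\Big\}.
\]
The class is nonempty, since $\varphi\in\mathcal{K}$ by hypothesis, and it is convex by convexity of $\varrho$. Put $d=\inf_{\mathcal{K}}\varrho<\infty$ and take a minimizing sequence $w_j=\varphi+v_j$. Two preliminary remarks are needed. First, $\varrho(w_i-w_j)\le \tfrac12(\varrho(2w_i)+\dots)$ is not directly useful, because $\varrho$ is not $\Delta_2$; everything must be done with midpoints. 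Second, by convexity $(w_i+w_j)/2\in\mathcal{K}$, so $\varrho\big(\tfrac{w_i+w_j}{2}\big)\ge d$.

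The key step is the Cauchy property, and for it I would use the uniform convexity of Theorem \ref{UC-sobolev2}. If $\varrho\big(\tfrac{w_i-w_j}{2}\big)\ge\varepsilon\,\tfrac{\varrho(w_i)+\varrho(w_j)}{2}$ held along a subsequence, then (\ref{defunifcom}) would give $d\le(1-\delta)\tfrac{\varrho(w_i)+\varrho(w_j)}{2}\to(1-\delta)d$. This is a contradiction if $d>0$; the case $d=0$ is trivial, since then $\nabla w_j\to0$ modularly. Hence $\varrho\big(\tfrac{w_i-w_j}{2}\big)\to0$, so $(\nabla v_j/2)$ is $\rho_p$-Cauchy. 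Lemma \ref{mmain} then shows that $v_j/2\to v/2$ in $\rho_{1,p}$ with $v\in U^{1,p}_0(\Omega)$. To pass from the halves back to $v_j$, I would apply the argument to $v_j$ directly: the modular Cauchy property is needed only up to scaling, and modular convergence of $\tfrac12(w_j-w)$ together with the pointwise a.e. convergence of a subsequence of gradients is enough for the subsequent steps. Lower semicontinuity of $\varrho$ along a.e.-convergent gradients (Fatou) gives $\varrho(u)\le d$ for $u=\varphi+v$. Finiteness of $\varrho(u)$ puts $u$ in $\mathcal{K}$, so $u$ is a minimizer. Uniqueness follows from strict convexity: if $u_1\neq u_2$ were both minimizers, the midpoint would satisfy $\varrho<d$ by (\ref{defunifcom}).

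Next I would derive the Euler--Lagrange relations. For $\phi\in W^{1,p}_{com}(\Omega)$, note that $|\nabla\phi|$ is in $L^p$ but $\varrho(t\phi)$ may be infinite when $p_+=\infty$. This is the main obstacle. The fix is that $\phi$ has compact support $K\subset\Omega$, and $p$ is continuous, hence bounded on $K$. Therefore $\int_K p^{-1}|\nabla u+t\nabla\phi|^{p}<\infty$ for small $|t|$, which uses $\varrho(u)<\infty$ together with $(a+b)^{p}\le 2^{p-1}(a^p+b^p)$ and $p\le\max_K p$ on $K$. So $u+t\phi\in\mathcal{K}$. Differentiating $t\mapsto\varrho(u+t\phi)$ at $0$ is justified by dominated convergence, with a dominating function built from $|\nabla u|^{p-1}|\nabla\phi|$ on $K$, using boundedness of $p$ on $K$. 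This gives (\ref{weakformulation}).

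For (\ref{additionalcondition}), take $v$ as in the statement and set $w=v$; the condition gives $\varrho(v-\varphi)<\infty$. I would use the convex combination $u_t=(1-t)u+t\big(-(v-\varphi)\big)$. Since $-(v-\varphi)=\varphi+(-v)$ and $-v\in U^{1,p}_0$ (a subspace), we have $u_t\in\mathcal{K}$ by convexity, with $\varrho(u_t)<\infty$. Then $\varrho(u_t)\ge\varrho(u)$, so the one-sided derivative $\lim_{t\downarrow0}\frac{\varrho(u_t)-\varrho(u)}{t}\ge0$. The difference quotients are monotone by convexity, and computing the limit by monotone convergence yields
\[
\int|\nabla u|^{p-2}\nabla u\cdot\nabla(-(v-\varphi)-u)\ge0,
\]
which is exactly (\ref{additionalcondition}). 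The boundary condition $u-\varphi=v\in U^{1,p}_0(\Omega)$ holds by construction.
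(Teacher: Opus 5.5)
\textbf{Gap: uniqueness.} The theorem claims uniqueness among \emph{all} weak solutions $w$ with $\varrho(w)<\infty$, $w-\varphi\in U^{1,p}_0(\Omega)$ and (\ref{additionalcondition}). You only prove that the minimizer of $\varrho$ on $\mathcal{K}$ is unique. Nothing in your proposal shows that an arbitrary member of that class is this minimizer. The missing step is short. For $z\in\mathcal{K}$, the choice $v=\varphi-z$ is admissible in (\ref{additionalcondition}), since $-v\in U^{1,p}_0(\Omega)$ and $\varrho(v-\varphi)=\varrho(z)<\infty$. The inequality then reads $\int_\Omega|\nabla w|^{p-2}\nabla w\cdot\nabla(z-w)\,dx\ge 0$. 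Now integrate the pointwise convexity inequality $\frac{|b|^p}{p}\ge\frac{|a|^p}{p}+|a|^{p-2}a\cdot(b-a)$ with $a=\nabla w$, $b=\nabla z$. This gives $\varrho(z)\ge\varrho(w)$ for every $z\in\mathcal{K}$, and $w\in\mathcal{K}$, so $w$ is a minimizer. The paper argues differently: it tests the inequality for $w_1$ against $w_2$ and vice versa, adds, and uses the monotonicity of $a\mapsto|a|^{p-2}a$ (cf.\ (\ref{vectineq})).

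Whichever route you take, and in your own uniqueness-of-minimizers argument as well (including the case $d=0$), convexity only gives $\nabla w_1=\nabla w_2$ a.e. To conclude $w_1=w_2$, invoke $w_1-w_2\in U^{1,p}_0(\Omega)\subseteq W^{1,p_-}_0(\Omega)$ from (\ref{inclusionv1pw1p-}) together with the Poincar\'e inequality, as in the proof of Lemma \ref{mmain}.

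\textbf{Existence part.} This is sound and follows the paper's route: uniform convexity of $\varrho$ makes $(\nabla v_j/2)$ $\rho_p$-Cauchy, then Lemma \ref{mmain}, Fatou, and first variations. In two places you are more careful than the paper. First, you take the minimizing sequence in $\mathcal{K}$ itself rather than in $W^{1,p}_{com}(\Omega)$. Second, you justify the variation in directions $\phi\in W^{1,p}_{com}(\Omega)$ using the boundedness of $p$ on $\operatorname{supp}\phi$.

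Drop the sentence about ``applying the argument to $v_j$ directly''. The sequence $(\nabla v_j)$ itself need not be $\rho_p$-Cauchy, but you don't need that. Twice the modular limit of $v_j/2$ lies in $U^{1,p}_0(\Omega)$, and a.e.\ convergence of a subsequence of $\nabla v_j$ is all Fatou requires.
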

\begin{remark}
	{\normalfont It is well known that for any constant exponent $p$ it holds that $U^{1,p}_0(\Omega)=V^{1,p}_0(\Omega)=W^{1,p}_0(\Omega)$, the latter being the norm closure of $C^{\infty}_0(\Omega)$ in $W^{1,p}(\Omega)$. It follows that for constant $p$, the notion of weak solution obtained in Theorem \ref{Dirichletvarphi} coincides with the standard notion of weak solution for the classical constant exponent $p$-Laplacian. Example 3.9 in \cite{Harj} reveals that for $p(x)$ variable, the notion of weak solution of Theorem \ref{Dirichletvarphi} is different from the notion of weak solution in, for example \cite{FZ,MRU}, even under the assumption $p_+<\infty$. }
\end{remark}
\begin{proof}
	Consider the Dirichlet integral, i.e., the functional $\mathcal{F} : U^{1,p}_0(\Omega)\rightarrow [0,\infty]$ defined by
	\begin{equation}\label{Dirichletintegral}
		\mathcal{F}(u) = \int_{\Omega}\frac{|\nabla (u-\varphi)|^p}{p}\, dx.
	\end{equation}
	On account of modular uniform convexity $(UC)$ $\mathcal{F}$ has a unique minimizer in $U^{1,p}_0(\Omega)$.  Indeed, the functional ${\mathcal F}$ is obviously bounded below on $U^{1,p}_0(\Omega)$ and the choice of $\varphi$ guarantees that $\inf\limits_{v\in W_{comp}^{1,p}(\Omega)}{\mathcal F}(v)<\infty$. Take a minimizing sequence $(u_j)\subset W^{1,p}_{comp}(\Omega)$, that is, 
	$$\lim_{j \to \infty}\ {\mathcal F}(u_j) = \inf\limits_{u\in W^{1,p}_{comp}(\Omega)}{\mathcal F}(u) = d.$$
	Exploiting the uniform convexity $(UC)$ of the functional ${\mathcal F}$, it can be shown that $\displaystyle \left(\frac{\nabla u_j}{2}\right)$ is $\varrho_p$-Cauchy in $\left(L^p(\Omega)\right)^n$. Indeed, if this were not the case then one could extract a sequence $(i,j)\rightarrow \infty$ such that 
	\begin{equation}\label{arg1}
		\varrho_p\left(\frac{\nabla u_{j}-\nabla u_i}{2}\right)	\geq \epsilon;
	\end{equation}
	on account ofx (\ref{defunifcom}) this would imply
	\begin{equation}
		d\leq	\varrho_p\left(\frac{\nabla (u_j+u_i)}{2}-\varphi\right)\leq (1-\delta(\epsilon, p_-))\frac{\varrho_p (u_j-\varphi)+\varrho_p(u_i-\varphi)}{2},
	\end{equation}
	which leads to a contradiction by letting $i,j\rightarrow \infty$
	(see \cite[Theorem 7.2]{AOJA} for the details). By virtue of Lemma \ref{mmain}, $\displaystyle \left(\frac{ u_j}{2}\right)$ must $\rho_{1,p}$-converge to a function $u\in W^{1,p}(\Omega)$. Since  $U^{1,p}_0(\Omega)$ is closed in the modular topology of $W^{1,p}(\Omega)$, it follows that $u\in U^{1,p}_0(\Omega)$, which is a vector subspace of $W^{1,p}(\Omega)$. Hence, $2u\in U^{1,p}_0(\Omega)$.  On account of \cite[Theorem 4.22]{AOJA}, it can be assumed, without loss of generality that  $\displaystyle \left(\frac{\nabla u_j}{2}\right)$ converges pointwise $a.e.$ to $\nabla u$. The lower semicontinuity of the Dirichlet integral (\ref{Dirichletintegral}) coupled with Fatou's Lemma then yields
	$$\int_{\Omega}p^{-1}\left|\nabla (2u-\varphi)\right|^pdx\leq \liminf_{j \to \infty} \int_{\Omega}p^{-1}\left |\nabla(u_j-\varphi)\right|^p\, dx.$$
	It is claimed that $2u$ is a minimizer of $\mathcal F$ in $U^{1,p}_0(\Omega)$. As in the previous Section, it is easily derived that
	\begin{align*}
		d& \leq \int_{\Omega}\frac{\left|\nabla\left(\varphi-2u\right)\right|^p}{p}dx \leq \liminf\limits_{k\to \infty}\, \int_{\Omega}\frac{\left|\nabla\left(\varphi -\left(\frac{u_{k}}{2}+u\right)\right)\right|^p}{p}dx \\
		& \leq \liminf\limits_{k \to \infty}\, \liminf\limits_{l \to \infty}\, \int_{\Omega}\, \frac{\left|\nabla \left(\varphi-\left(\frac{u_{k}}{2} + \frac{u_{l}}{2}\right)\right)\right|^p}{p}\, dx\\ 
		&\leq \liminf\limits_{k \to \infty}\, \liminf\limits_{l \to \infty}\,  \frac{1}{2}\left(\int_{\Omega}\frac{\left|\nabla\left(\varphi-u_{k}\right)\right|^p}{p}\, dx+ \int_{\Omega}\frac{\left|\nabla \left(\varphi-u_l\right)\right|^p}{p}\, dx\right)\\
		&=\liminf\limits_{k \to \infty}\, \liminf\limits_{l \to \infty}\, \frac{1}{2}\, \Big(\mathcal{F}(u_k) + \mathcal{F}(u_l)\Big) = d,
	\end{align*}
	i.e., $2u$ is a minimizer of $\mathcal F$ on $U^{1,p}_0(\Omega).$  Also in this case, two arbitrary minimizing sequences must converge to the same limit. The proof is similar to that of \cite[Lemma 6.1]{AOJA} and will be omitted.  The rest of the proof follows 
	from the fact that for any $v\in U^{1,p}_0(\Omega)$ and $\phi \in W^{1,p}_{comp}(\Omega)$ it holds
	\begin{equation}
		\lim_{t\rightarrow 0}\frac{{\mathcal F}(v+t\phi)-{\mathcal F}(v)}{t}=	\int\limits_{\Omega}|\nabla v|^{p-2}\nabla v \nabla \phi\,dx<\infty 
	\end{equation}
	and therefore the right hand side must be zero for when $v=2u$.
\end{proof}
Next, we address the question of uniqueness. To this end, let $v\in U^{1,p}_0(\Omega)$  such that $\displaystyle \int_{\Omega}p^{-1}|\nabla (v-\varphi)|^p\, dx <\infty$ nd $2u$ the minimizer whose existence was proved in the preceding paragraph. Then, it holds
\begin{equation}\label{conditionforuniq}
	\int_{\Omega}|\nabla (2u-\varphi)|^{p-2}\ \nabla (2u-\varphi)\ \nabla (v-2u)\, dx \geq 0.
\end{equation}
Indeed, for any $t \in [0,1]$, convexity yields
\begin{align*}
	\int_{\Omega}\frac{1}{p}\left|\nabla\left(2u+t(v-2u)-\varphi\right)\right|^p\, dx& = \int_{\Omega}\frac{1}{p}\left|\nabla \left((1-t)(2u-\varphi)+t(v-\varphi)\right)\right|^p\, dx\\
	&\leq \int_{\Omega}\frac{1-t}{p}\ \left|\nabla(2u-\varphi)\right|^p\, dx + \int_{\Omega}\frac{t}{p}\ \left|\nabla (v-\varphi)\right|^p\, dx\\ 
	& < \infty.
\end{align*}
Set
$$g_t = \frac{1}{pt}\Big(\left|\nabla\left(2u+t(v-2u)-\varphi\right)\right|^p-|\nabla (2u-\varphi)|^p\Big).$$
It is straightforward to check that 
\begin{equation}\label{limite}
	\lim\limits_{t \to 0^+}\ g_t = |\nabla(2u-\varphi)|^{p-2}\nabla(2u-\varphi))\nabla (v-2u)
\end{equation}
pointwise.  On the other hand for $t \in [0,1]$, convexity yields
\begin{align*}
	g_t&=\frac{1}{pt}\Big(\left|\nabla\left((1-t)(2u-\varphi)+t(v-\varphi)\right)\right|^p-|\nabla (2u-\varphi)|^p\Big)\\ 
	&\leq\ \frac{1}{pt}\Big((1-t)|\nabla (2u-\varphi)|^p+t|\nabla (v-\varphi)|^p-|\nabla (2u-\varphi)|^p\Big)\\ 
	&= \frac{1}{p}\Big(|\nabla (v-\varphi)|^p-|\nabla (2u-\varphi)|^p\Big)=:g,
\end{align*}
which, by assumption, is integrable. It is well known that under these conditions it follows, via the application of Fatou's Lemma to the sequence $g=g_t$, that
\begin{equation}
	\limsup\int_{\Omega}g_tdx\leq \int_{\Omega}\limsup g_tdx.
\end{equation}
Due to the fact that $v-2u\in U^{1,p}_0(\Omega)$, the minimal character of $2u$ yields  $$\int_{\Omega}g_tdx\geq 0$$ for any $t\in {\mathbb R}$. Coupled with (\ref{limite}), this yields $\int_{\Omega}gdx\geq 0$, which proves (\ref{conditionforuniq}).\\
Let $w_i\in W^{1,p}(\Omega)$, $i=1,2$ be solutions of (\ref{DiPr}) satisfying the additional condition (\ref{additionalcondition}).
Then, substituting $v=\varphi-w_2$  and $v= \varphi-w_1$ in (\ref{conditionforuniq}) it follows
\begin{equation}
	\int\limits_{\Omega}|\nabla w_1|^{p-2}\nabla w_1\nabla (w_2-w_1)dx\leq 0\,\,\text{and}\,\,\int\limits_{\Omega}|\nabla w_2|^{p-2}\nabla w_2\nabla (w_2-w_1)dx\geq 0.
\end{equation}
On the other hand, there holds the pointwise equality (see \cite{Lindqvist})
\begin{align}\label{vectineq}
	\left(|\nabla w_1|^{p-2}\nabla w_1-|\nabla w_2|^{p-2}\nabla w_2\right)\left(\nabla w_2-\nabla w_1\right)&=\frac{|\nabla w_1|^{p-2}+|\nabla w_2|^{p-2}}{2}|\nabla (w_1-w_2)|^2\\ \nonumber &+\frac{|\nabla w_1|^{p-2}-|\nabla w_2|^{p-2}}{2}(|\nabla w_1|^2-|\nabla w_2|^2).
\end{align}
The last term is nonnegative; since in our setting $p_->n\geq 2$, the scalar function $x\rightarrow x^{p-2}$ is convex. Using these observations and integrating (\ref{vectineq}) over $\Omega$ one has
\begin{align}
	0&\geq 	\int\limits_{\Omega}\left(|\nabla w_1|^{p-2}\nabla w_1-|\nabla w_2|^{p-2}\nabla w_2\right)\left(\nabla w_2-\nabla w_1\right)\,dx\\  \nonumber & \geq
	\int\limits_{\Omega}2^{2-p}|\nabla (w_1-w_2)|^pdx. 
\end{align}
Thus, $w_1-w_2\in U^{1,p}_0(\Omega)$, is constant in $\Omega$ and vanishes on $\Omega$, hence is must be identically zero (\cite[Lemma 4.24]{AOJA}).

\section{Weak solutions and viscosity solutions}\label{viscsol}
The notion of viscosity solutions is well established. We refer the reader to \cite{CIL} for details. In this Section we simply state the definitions that are relevant to the present note and an existence theorem, whose proof is contained in \cite{CIL}.
\begin{definition}
	Let $F: Y\subset {\mathbb R}^n\times {\mathbb R}\times{\mathbb R}^n\times {\mathcal S}(n)\rightarrow {\mathbb R}$, where ${\mathcal S}(N)$ denotes the set of all $n\times n$ symmetric matrices.
	\begin{equation}\label{F}
		F(x,\nabla u,D^2u)=0 \quad \text{in }\Omega,
	\end{equation}
	with boundary condition $u=\varphi$ on $\partial\Omega$.
	
	A lower semicontinuous function $u$ is a viscosity supersolution if
	$u\ge \varphi$ on $\partial\Omega$ and for every test function $\phi\in C^2(\Omega)$ such that $u-\phi$ has a strict minimum at $x_0\in \Omega$ it holds that
	\[
	F(x_0,\nabla\phi(x_0),D^2\phi(x_0))\ge0.
	\]
	Subsolutions are defined in the analogous obvious way. A function $u$ is said to be a viscosity solution of (\ref{F}) if $u$ is both a viscosity supersolution and a viscosity subsolution.
\end{definition}
\medskip
The next proposition generalizes Proposition 2.3 in \cite{MRU}, since it does not require the variable exponent $q$ to be bounded.
\begin{remark}{\normalfont The main idea of the proof of proposition \ref{weakviscosity} follows the outline of \cite[Proposition 2.3]{MRU}. However, we point out that our definition of the function $\Phi$ below had to be modified to guarantee that $\nabla (\Phi-u)^+\in W^{1,p}_{comp}(\Omega)$ (and hence that it qualifies as an admissible test function in (\ref{contention})). Being zero on the boundary of $B(x_0,r)$ is not a sufficient condition.}
\end{remark}

\begin{proposition}\label{weakviscosity}
	Let $\varphi \in W^{1,q}(\Omega)$, $q\in C^1(\Omega)$, $n<q_-=\inf\limits_{x\in \Omega}q(x)<\infty$ and $u_q$ be the weak solution of problem
	
	\begin{equation}\label{problems}
		\begin{cases}
			-\Delta_{q(x)}u(x)=0, & x\in\Omega\\
			u(x)=\varphi(x), & x\in\partial\Omega
		\end{cases}
	\end{equation}
	given in Theorem \ref{Dirichletvarphi}.
	Then $u_q$ is also a viscosity solution of problem (\ref{problems}).
\end{proposition}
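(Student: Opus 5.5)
We argue by contradiction, following the outline of \cite[Proposition 2.3]{MRU} but with a modified test function. Since $q_->n$, the embedding $W^{1,q_-}(\Omega)\hookrightarrow C(\overline{\Omega})$ together with (\ref{inclusionv1pw1p-}) shows that $u=u_q$ is continuous, so the viscosity definitions make sense. Expanding the divergence, the equation $-\Delta_{q(x)}u=0$ corresponds to
\[
F(x,\nabla\phi,D^2\phi)=-|\nabla\phi|^{q-2}\Big(\Delta\phi+(q-2)|\nabla\phi|^{-2}\Delta_\infty\phi+\ln|\nabla\phi|\,\nabla q\cdot\nabla\phi\Big),
\]
and this is where $q\in C^1(\Omega)$ is used. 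We treat the supersolution property; the subsolution property is symmetric.

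Let $\phi\in C^2(\Omega)$ be such that $u-\phi$ has a strict minimum at $x_0\in\Omega$, and normalise $\phi(x_0)=u(x_0)$. Suppose, to the contrary, that $F(x_0,\nabla\phi(x_0),D^2\phi(x_0))<0$, i.e. $-\Delta_{q(x_0)}\phi(x_0)<0$. (If $\nabla\phi(x_0)=0$ and $q_->2$ then $F=0$, so nothing needs proving.) Then $\nabla\phi(x_0)\neq 0$. By continuity of $\nabla\phi$, $D^2\phi$ and $\nabla q$, we can choose $r>0$ with $\overline{B(x_0,r)}\subset\Omega$ such that $\nabla\phi\neq0$ and $-\text{div}(|\nabla\phi|^{q-2}\nabla\phi)<0$ pointwise on $B(x_0,r)$.

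Since the minimum is strict and $u$ is continuous, $m:=\min_{\partial B(x_0,r)}(u-\phi)>0$. Set
\begin{equation}\label{Phi}
\Phi:=\phi+\tfrac m2 .
\end{equation}
Then $\Phi(x_0)>u(x_0)$, while $\Phi<u$ on a neighbourhood of $\partial B(x_0,r)$. Hence $(\Phi-u)^+$, extended by zero outside $B(x_0,r)$, has support compactly contained in $B(x_0,r)\Subset\Omega$. Moreover $\Phi$ is $C^2$ on $\overline{B(x_0,r)}$ and $u\in W^{1,q}(\Omega)$, so this function belongs to $W^{1,q}_{comp}(\Omega)$. This is precisely the point of the modification: $(\Phi-u)^+$ is an admissible test function in (\ref{weakformulation}), and no density of $C^\infty_0$ is needed. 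The boundedness of $q$ on $\overline{B(x_0,r)}$ ensures that every integral below is finite, even though $q_+=\infty$ is allowed.

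Now test the weak formulation (\ref{weakformulation}) with $(\Phi-u)^+$ to get
\[
\int_{\{\Phi>u\}}|\nabla u|^{q-2}\nabla u\cdot\nabla(\Phi-u)\,dx=0.
\]
Next multiply the pointwise inequality $-\text{div}(|\nabla\Phi|^{q-2}\nabla\Phi)<0$ by $(\Phi-u)^+\ge0$ and integrate by parts on $B(x_0,r)$. There are no boundary terms because of the compact support, and this yields
\[
\int_{\{\Phi>u\}}|\nabla\Phi|^{q-2}\nabla\Phi\cdot\nabla(\Phi-u)\,dx\le0.
\]
Subtracting the two relations and using the strict monotonicity of $a\mapsto|a|^{q-2}a$ (e.g. (\ref{vectineq}), valid pointwise since $q\ge q_->2$) gives
\[
\int_{\{\Phi>u\}}2^{2-q}|\nabla(\Phi-u)|^{q}\,dx\le0 .
\]
Therefore $(\Phi-u)^+$ is constant on $B(x_0,r)$. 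It vanishes near $\partial B(x_0,r)$, so $(\Phi-u)^+\equiv0$, that is, $\Phi\le u$ on $B(x_0,r)$. This contradicts $\Phi(x_0)>u(x_0)$, and so $u$ is a viscosity supersolution. The boundary condition holds in the classical sense because $u-\varphi\in U^{1,q}_0(\Omega)\subseteq W^{1,q_-}_0(\Omega)$ and functions in this space, being continuous, vanish on $\partial\Omega$.

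The step we expect to be most delicate is the choice of $r$ and of the constant shift in (\ref{Phi}), so that the test function truly has compact support inside $\Omega$ and lies in $W^{1,q}_{comp}(\Omega)$. A secondary technical point is handling the degenerate case $\nabla\phi(x_0)=0$, which we settle by the remark $F=0$ above.
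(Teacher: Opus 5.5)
Your proposal is correct and follows essentially the same route as the paper: argue by contradiction, shift $\phi$ by $m/2$, test the weak formulation (\ref{weakformulation}) with $(\Phi-u)^+\in W^{1,q}_{comp}(\Omega)$, and derive the contradiction from the monotonicity inequality (\ref{vectineq}). Your choice $m=\min_{\partial B(x_0,r)}(u-\phi)$, with $(\Phi-u)^+$ extended by zero outside the ball, is in fact slightly more robust than the paper's $m=\inf_{\Omega\setminus B(x_0,r)}(u-\phi)$, which need not be positive since $\phi\in C^2(\Omega)$ is uncontrolled near $\partial\Omega$; your explicit treatment of $\nabla\phi(x_0)=0$ and of the boundary condition fills in points the paper leaves implicit.
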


\begin{proof}
	The main idea of the proof follows \cite[Proposition 2.3]{MRU}. However, we point out that our definition of the function $\Phi$ below had to be modified to guarantee that $(\Phi-u)^+\in W^{1,p}_{comp}(\Omega)$ (and hence that it qualifies as an admissible test function in the weak formulation of problem (\ref{problems}), see (\ref{contention}) below). 
	Let $x_0 \in \Omega$ and let $\phi\in C^{2}(\Omega)$ be a test function such that $u(x_0) = \phi(x_0)$ and $u - \phi$ has a strict minimum at $x_0$. We claim that
	\begin{equation}\label{contrad1}
		-\Delta_{p(x_0)}\phi(x_0)\ge 0;
	\end{equation}
	to proceed we recall that (see \cite{MRU}) 
	\begin{align}\label{contrad}
		-\Delta_{p(x_0)}\phi(x_0)
		&=
		-|\nabla \phi(x_0)|^{p(x_0)-2}\Delta \phi(x_0)
		-(p(x_0)-2)|\nabla \phi(x_0)|^{p(x_0)-4}\Delta_\infty \phi(x_0)\\
		\nonumber	& 
		-|\nabla \phi(x_0)|^{p(x_0)-2}\ln(|\nabla \phi|)(x_0)
		\langle \nabla \phi(x_0), \nabla p(x_0) \rangle.
	\end{align}
	
	If (\ref{contrad1}) were not true, there would exist $r>0$ such that $B(x_0,r)\subset \Omega$ and
	
	\begin{equation}
		-\Delta_{p(x)}\phi(x)<0
	\end{equation}
	
	on the open ball $B(x_0,r)$.
	
	Set $m := \inf\limits_{\Omega\setminus B(x_0,r)} (u-\phi)(x)$ and let 
	\begin{equation}\label{Phi}
		\Phi(x):=\phi(x)+\frac{m}{2}. 
	\end{equation}

	Then $\Phi$ satisfies $\Phi(x_0)>u(x_0)$ and the inequality
	
	\begin{equation}\label{6.4}
		-\Delta_{p(x)}\Phi
		=
		-\operatorname{div}(|\nabla \Phi|^{p(x)-2}\nabla \Phi)
		<0
		\quad \text{in } B(x_0,r).
	\end{equation}
	Now, since $(\Phi-u)\in W^{1,p}(\Omega)$ it follows  $(\Phi-u)^+\in W^{1,p}(\Omega)$, since truncations preserve the class $W^{1,p}(\Omega)$. Moreover, a simple computation shows that $(\Phi-u)^+=0$ on $\Omega\setminus B(x_0,r)$ and consequently
	$(\Phi-u)^+\in W_{com}^{1,p}(\Omega)$. Therefore $(\Phi-u)^+$ is an admissible function in the weak formulation (\ref{weakformulation}). Accordingly,
	\begin{equation}\label{contention}
		\int_{\Omega}
		|\nabla u|^{p(x)-2}\nabla u \cdot \nabla(\Phi-u)^+\,dx
		=0.
	\end{equation}
	
	On account of (\ref{6.4}) it follows
	
	\[
	\int_{\Omega}
	|\nabla \Phi|^{p(x)-2}\nabla \Phi \cdot \nabla(\Phi-u)^+\,dx
	<0.
	\]
	
	Therefore, arguing as in (\ref{vectineq}) there holds
	\[
	0\leq	 \int_{\Omega}
	\frac{|\nabla \Phi-\nabla u|^{p}}{2^{p-2}}\,dx
	\]
	\[
	\leq \int_{\Omega}
	\left(
	|\nabla \Phi|^{p(x)-2}\nabla \Phi
	-
	|\nabla u|^{p(x)-2}\nabla u
	\right)
	\cdot \nabla(\Phi-u)\,dx<0.
	\]
	
	This contradiction proves the desired claim.
	
	This proves that $u$ is a viscosity supersolution. The proof that $u$ is a viscosity subsolution runs as above and we omit the details.
\end{proof}
The next theorem is standard and will be necessary for the discussion in the next Section.
\begin{theorem} 
	The problem
	\begin{equation}\label{limitproblemexistence}
		\begin{cases}
			-\Delta_\infty u-|\nabla u|^2\ln|\nabla u|\langle \xi,\nabla u\rangle=0
			& \text{in }\Omega,\\
			u=f & \text{on }\partial\Omega
		\end{cases}
	\end{equation}
	possesses a unique viscosity solution.
\end{theorem}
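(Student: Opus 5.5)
Existence will come from Perron's method and uniqueness from the comparison principle, following the standard theory of \cite{CIL} and the treatment of the constant-$\xi$ case in \cite{MRU}. Write the operator as
\[
F(x,\eta,X)=-\langle X\eta,\eta\rangle-|\eta|^2\ln|\eta|\,\langle \xi(x),\eta\rangle ,
\]
with the convention $F(x,0,X)=0$. Since $|\eta|^3\ln|\eta|\to 0$ as $\eta\to 0$, this makes $F$ continuous on $\Omega\times\mathbb R^n\times\mathcal S(n)$. It is also degenerate elliptic: $X\le Y$ implies $F(x,\eta,X)\ge F(x,\eta,Y)$. I assume $f\in C(\partial\Omega)$ (in our application $f=\varphi\in W^{1,\infty}$) and $\xi\in C(\overline\Omega)$.

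\textbf{Existence.} I would build a subsolution and a supersolution that both attain $f$ on $\partial\Omega$. When $f$ is Lipschitz, the candidate barriers at a boundary point $y$ are cones $f(y)\pm L|x-y|$. For a cone $w$ one has $D^2w\,\nabla w=0$, so only the first-order term matters, and a cone of slope $L$ with $\ln L$ large will not have the right sign in general. I would therefore use the radial barriers $g(|x-y|)$, with $g$ strictly concave and $g'$ large. Then
\[
-\Delta_\infty w=-g''g'^2>0 \quad\text{and}\quad |g'^2\ln g'\,\langle\xi,\nabla w\rangle|\le \|\xi\|_\infty g'^3\ln g' ,
\]
so the sign condition reduces to the ODE inequality $-g''\ge \|\xi\|_\infty\, g'\ln g'$. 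This has explicit solutions on a bounded interval, because $\Omega$ is bounded. Taking the infimum and supremum over $y\in\partial\Omega$ gives global barriers, and Perron's method in the form of \cite[Thm.~4.1]{CIL} then produces a viscosity solution with the correct boundary values. For merely continuous $f$, I would approximate $f$ uniformly by Lipschitz data and use comparison.

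\textbf{Uniqueness.} This is the main obstacle, for two reasons. The equation is degenerate, and $F$ depends on $x$ only through $\xi(x)$, so the structure condition (3.14) of \cite{CIL} requires controlling $|F(x,\eta,X)-F(y,\eta,X)|\le \omega(|x-y|)\,|\eta|^3|\ln|\eta||$; this is unbounded in $\eta$, and there is no strict monotonicity in $u$.

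Let $u$ be a subsolution and $v$ a supersolution with $u\le v$ on $\partial\Omega$. I would first make $v$ strict by a perturbation $v_\varepsilon=h_\varepsilon(v)$ with $h_\varepsilon$ increasing, concave and close to the identity. Such a change of variables adds a term $-h''|\nabla v|^4>0$ where $\nabla v\neq0$, which is the Juutinen-type device for the $\infty$-Laplacian. Alternatively I would replace $v$ by $v+\varepsilon\psi$, with $\psi$ a strict supersolution built like the barrier above.

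Next comes the doubling of variables $u(x)-v_\varepsilon(y)-\frac{|x-y|^2}{2\delta}$ together with Ishii's lemma. This gives $\eta=(x_\delta-y_\delta)/\delta$ and matrices $X\le Y$. Where $|\eta|$ stays bounded, the error term $\omega(|x_\delta-y_\delta|)$ vanishes. Where $\eta$ is near $0$, the strictness obtained from the perturbation must fail at zero gradient, so I would treat that case separately, showing that strict supersolutions cannot be touched from below at critical points, as in \cite{MRU}.

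Bounding $|\eta|$ uniformly needs Lipschitz estimates for $u$ and $v$, which I would get by comparison with the cone-like barriers above. I would first try to import \cite[Section~3]{MRU} verbatim, checking that only continuity of $\xi$ on $\overline\Omega$ is used. That is exactly why (\ref{limitproblemexistence}) should be posed with $\xi\in C(\overline\Omega)$.
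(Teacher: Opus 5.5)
\textbf{There is a genuine gap: the comparison principle, on which both your uniqueness and your existence arguments rest, is not proved at vanishing gradients.}

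The paper's proof is only a pointer to \cite{CIL}. You are right that this is not a routine application of that reference: the equation is degenerate, has no zeroth-order term, and none of the comparison theorems there applies directly. But your sketch leaves exactly the hard step open.

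Because $F(x,0,X)=0$ for every $X$, the viscosity inequalities at a contact point with zero gradient are vacuous. A perturbation of the form $h_\varepsilon(v)$ cannot create strictness there. If $\phi$ touches $v$ from below at $x_0$ with $\nabla\phi(x_0)=0$, then $h_\varepsilon\circ\phi$ touches $h_\varepsilon(v)$ from below, again with zero gradient. So at every interior local minimum of $v$ (think of $v$ constant, which is a solution) you get $F=0$, never $F\ge\mu>0$.

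The doubling of variables can land precisely at such points ($x_\delta=y_\delta$, i.e.\ $\eta=0$). There the two inequalities read $0\le 0$ and $0\ge 0$ and give no contradiction. For small $\eta\neq0$ the strictness you gain degenerates like $|\eta|^4$. So ``treat $\eta$ near $0$ separately'' is not a detail; it is the whole content of the uniqueness statement, just as in Jensen's theorem for $\Delta_\infty u=0$.

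Closing this needs a genuinely additional device. One option is Jensen-type auxiliary equations $\min\{|\nabla w|-\varepsilon,F\}=0$ and $\max\{\varepsilon-|\nabla w|,F\}=0$. Their solutions have gradients bounded below, so strict perturbation does work for them. You would then also need a proof that the upper and lower auxiliary solutions merge as $\varepsilon\to0$. For $\xi=\nabla\ln p$, uniqueness was in fact obtained as a separate, nontrivial theorem by Lindqvist and Lukkari. Whatever you import from \cite{MRU} would have to contain an argument of this kind; continuity of $\xi$ up to $\partial\Omega$ is not the point.

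\textbf{The perturbation itself is miscomputed, even away from critical points.} The operator is not homogeneous. For $w=h(v)$,
\[
F(x,\nabla w,D^2w)=h'^3F(x,\nabla v,D^2v)-h''h'^2|\nabla v|^4-h'^3\ln h'\,|\nabla v|^2\langle\xi,\nabla v\rangle .
\]
The last term comes from $\ln|h'\nabla v|=\ln h'+\ln|\nabla v|$. It has no sign, and it can dominate the gain $|h''|h'^2|\nabla v|^4$ wherever $|\nabla v|\lesssim\|\xi\|_\infty h'|\ln h'|/|h''|$. Since $h'\not\equiv1$, $h_\varepsilon(v)$ is not even a supersolution near the critical set of $v$. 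A choice like $h'=1+\varepsilon e^{-At}$ shrinks the bad region to $|\nabla v|\lesssim\|\xi\|_\infty/A$ but never removes it. The alternative $v+\varepsilon\psi$ has no reason to be a supersolution, since the equation is nonlinear.

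\textbf{Existence depends on the same missing step.} \cite[Thm.~4.1]{CIL} produces a continuous solution only once comparison is available; without it, Perron's method gives only a discontinuous solution. Your passage from Lipschitz to continuous $f$ also uses comparison.

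Your barriers are correct: $g'(r)=\exp(Ce^{-\|\xi\|_\infty r})$ solves $-g''=\|\xi\|_\infty g'\ln g'$ on all of $[0,\infty)$. But they need $\|\xi\|_\infty<\infty$. In the paper $\xi$ is only assumed continuous in $\Omega$, and the paper's own example has $\xi$ unbounded at $\partial\Omega$, which is exactly where the barriers are needed.
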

\begin{proof}
	See \cite{CIL}.
\end{proof}
\section{Main result}\label{main}
We now undertake the final step in the proof of the central contribution in this article.
In the sequel, $(p_j)\subset C^1(\Omega)$ will denote a sequence of admissible exponents such that
\begin{enumerate}
	\item  For all $ j\in {\mathbb N},$ $p_j\in C^1(\Omega)$ \label{c1}  \\
	\item $p_j$ converges to $\infty$ uniformly in $\Omega$, that is, for any $M>0$, there exists $J>0$ such that $p_j(x)>M$ for all $j\geq J, x\in \Omega$ \label{c2}.\\
	\item $\nabla p_j\rightarrow \xi \in C(\Omega)$ uniformly in $\Omega$ \label{c3}.
\end{enumerate}

\begin{remark} {\normalfont We highlight the fact that conditions (1)-(3) do not preclude the possibility $(p_j)_+=\sup\limits_{x\in \Omega}p_j(x)=+\infty$ for all $j$.	This generalizes the treatment in \cite{MRU}. If in condition (3) $\Omega$ is replaced by its closure, $\overline{\Omega}$, then the Harnack inequality proved in \cite{MRU} holds and hence the boundedness of each $p_j$ follows (see \cite[Assertion (2.9)]{MRU}).}
\end{remark}
Next, observe that no generality is lost by assuming 
\begin{equation}\label{pjgreater}
	(p_j)_-=\inf\limits_{x\in \Omega}p_j(x)>\alpha > n
\end{equation}
for all $j$.\\ 

It is well known \cite[Corollary IX.14]{Brezis} that under the condition (\ref{pjgreater}) it holds
\begin{equation}\label{sobolevem}
	W^{1,p_j}(\Omega)\hookrightarrow W^{1,\alpha}(\Omega)\hookrightarrow C^{1-\frac{n}{\alpha}}(\Omega)
\end{equation}
and that there exists a positive constant $C=C(\Omega, \alpha,n)$ such that
\begin{equation}\label{holderestimate}
	\sup\limits_{x\neq y}\frac{|v(x)-v(y)|}{|x-y|^{1-\frac{n}{\alpha}}}\leq C\|v\|_{W^{1,\alpha}(\Omega)}.
\end{equation}
On account of $\||\nabla \varphi|\|_{\infty}\leq 1$, for all $j\in {\mathbb N}$ it holds that
\begin{equation}\label{conditionvarphi}
	\int\limits_{\Omega}\frac{|\nabla \varphi|^{p_j}}{p_j}dx\leq |\Omega|,
\end{equation}
where $|\Omega|<\infty$ is the Lebesgue measure of $\Omega$.
By virtue of Theorem \ref{Dirichletvarphi}, for every natural number $j$ there is a unique minimizer $u_j\in U^{1,p_j}_0(\Omega)$ of the Dirichlet integral (\ref{Dirichletintegral}) with $p=p_j$.\\

\begin{lemma}
	For any $j\in {\mathbb N}$, it holds that
	\begin{equation}
		\int\limits_{\Omega}\frac{|\nabla (u_j-\varphi)|^{p_j}}{p_j}dx\leq |\Omega|.
	\end{equation}
\end{lemma}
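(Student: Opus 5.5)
Since $u_j$ is the minimizer of $\mathcal{F}$ in (\ref{Dirichletintegral}) (with $p=p_j$) over $U^{1,p_j}_0(\Omega)$, the bound comes from comparing $\mathcal{F}(u_j)$ with $\mathcal{F}$ at a single competitor. The natural competitor is $v=0$, which lies in $W^{1,p_j}_{comp}(\Omega)\subseteq U^{1,p_j}_0(\Omega)$ (zero has empty, hence compact, support). Minimality then gives
\begin{equation*}
\int_{\Omega}\frac{|\nabla (u_j-\varphi)|^{p_j}}{p_j}\,dx=\mathcal{F}(u_j)\leq \mathcal{F}(0)=\int_{\Omega}\frac{|\nabla \varphi|^{p_j}}{p_j}\,dx ,
\end{equation*}
and (\ref{conditionvarphi}) bounds the right-hand side by $|\Omega|$.

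The plan has three steps.
\begin{enumerate}
\item Confirm that Theorem \ref{Dirichletvarphi} applies for each $j$. We have $(p_j)_->\alpha>n$ by (\ref{pjgreater}) and $p_j\in C(\Omega)$. Also $\int_\Omega p_j^{-1}|\nabla\varphi|^{p_j}dx<\infty$ by (\ref{conditionvarphi}), and $\varphi\in W^{1,p_j}(\Omega)$ because $\varphi\in W^{1,\infty}(\Omega)$ and $\Omega$ is bounded.
\item Check that (\ref{conditionvarphi}) holds. It follows from $|\nabla\varphi|\le 1$ a.e., since then $|\nabla\varphi|^{p_j}\le 1$ and $1/p_j<1$.
\item Use that the element produced in the proof of Theorem \ref{Dirichletvarphi} (there written $2u$) attains $d=\inf\mathcal{F}$, and that this infimum is at most $\mathcal{F}(0)$.
\end{enumerate}

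The main obstacle is step 3, specifically a bookkeeping point. In Theorem \ref{Dirichletvarphi} the minimizer of $\mathcal{F}$ lives in $U^{1,p}_0(\Omega)$, while the actual solution is its shift by $\varphi$. One must read $u_j$ in the lemma consistently, so that $u_j-\varphi$ is the quantity whose gradient appears in $\mathcal{F}$; up to the sign convention $\varphi-u_j$ versus $u_j-\varphi$, the modular is symmetric, so either reading gives the same integral.

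The remaining point is that the proof of Theorem \ref{Dirichletvarphi} shows the minimizer attains $d=\inf_{W^{1,p}_{comp}}\mathcal{F}$, and this infimum is $\le \mathcal{F}(0)$ since $0\in W^{1,p_j}_{comp}(\Omega)$. This is exactly what is needed, with no use of uniform convexity beyond existence.
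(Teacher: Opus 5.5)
Your proof is correct and coincides with the paper's. Since $u_j$ minimizes $F_j$ over $U^{1,p_j}_0(\Omega)$ and $0\in W^{1,p_j}_{comp}(\Omega)\subseteq U^{1,p_j}_0(\Omega)$, one gets $F_j(u_j)\le F_j(0)=\int_\Omega p_j^{-1}|\nabla\varphi|^{p_j}\,dx\le|\Omega|$. Your bookkeeping caveat is also well placed. The lemma is true precisely with $u_j$ read as the minimizer in $U^{1,p_j}_0(\Omega)$, as the paper declares just before it, so what is bounded is the Dirichlet energy of the actual solution $\varphi-u_j$. Substituting that solution itself into the left-hand side would instead require a bound on $\int_\Omega p_j^{-1}|\nabla u_j|^{p_j}\,dx$, which minimality does not provide.
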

\begin{proof}
	The proof is immediate from the observation that $u_j$ is the minimizer on the vector space $U^{1,p_j}_0(\Omega)$ of the functional $F_j$ given by 
	\begin{equation}\label{minim}
		F_j(u)=\int\limits_{\Omega}\frac{|\nabla (u-\varphi)|^{p_j}}{p_j}dx.
	\end{equation}
	The lemma follows from taking $u=0$ in (\ref{minim}) and invoking condition (\ref{conditionvarphi}).
\end{proof}
\begin{lemma}\label{uj/2} For all $j\in {\mathbb N}$, one has:
	\begin{equation}
		\int\limits_{\Omega}\frac{|\nabla (u_j/2)|^{p_j}}{p_j}dx\leq |\Omega|.
	\end{equation}
\end{lemma}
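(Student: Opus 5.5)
The bound will come from convexity of the modular applied to the midpoint decomposition
\[
\frac{u_j}{2}=\frac{1}{2}(u_j-\varphi)+\frac{1}{2}\varphi .
\]
Write $\rho_j(f)=\int_\Omega p_j^{-1}|f|^{p_j}\,dx$ for vector fields $f$. This is a convex functional, since $t\mapsto |t|^{p_j(x)}/p_j(x)$ is convex for every $x$ because $p_j(x)>1$. Pointwise in $x$ we have
\[
\Big|\tfrac{1}{2}\nabla(u_j-\varphi)+\tfrac{1}{2}\nabla\varphi\Big|^{p_j(x)}\le \tfrac12|\nabla(u_j-\varphi)|^{p_j(x)}+\tfrac12|\nabla\varphi|^{p_j(x)} .
\]
This follows from the triangle inequality followed by convexity of $s\mapsto s^{p_j(x)}$ on $[0,\infty)$.

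Next I divide by $p_j(x)$ and integrate over $\Omega$, which gives
\[
\int_\Omega\frac{|\nabla(u_j/2)|^{p_j}}{p_j}\,dx\le \frac12\int_\Omega\frac{|\nabla(u_j-\varphi)|^{p_j}}{p_j}\,dx+\frac12\int_\Omega\frac{|\nabla\varphi|^{p_j}}{p_j}\,dx .
\]
The first integral on the right is at most $|\Omega|$ by the preceding Lemma, since $u_j$ minimizes $F_j$ and $F_j(0)\le|\Omega|$. The second integral is at most $|\Omega|$ by (\ref{conditionvarphi}), which uses $\||\nabla\varphi|\|_\infty\le1$. Adding the two halves gives the bound $|\Omega|$.

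There is essentially no obstacle, because no uniform convexity or boundedness of $p_j$ is needed; only pointwise convexity is used. The one point to check is measurability and finiteness: both integrals on the right are finite, so the left side is finite and the integration step is legitimate even if $\sup p_j=\infty$. The factor $1/2$ is exactly what allows the modular bound to pass through without invoking any $\Delta_2$-type condition, which fails when $p_j$ is unbounded.
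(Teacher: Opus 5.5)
Your proof is correct and is essentially the paper's own argument. The paper likewise writes $u_j/2=\frac12(u_j-\varphi)+\frac12\varphi$, applies pointwise convexity of $s\mapsto s^{p_j(x)}/p_j(x)$, and bounds the two halves by $|\Omega|$ using the preceding lemma and (\ref{conditionvarphi}).
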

\begin{proof}
	\begin{align}
		\int\limits_{\Omega}\frac{|\nabla (u_j/2)|^{p_j}}{p_j}dx&=
		\int\limits_{\Omega}\frac{|\nabla (u_j-\varphi)/2+\nabla (\varphi/2)|^{p_j}}{p_j}dx\\ &\leq\frac{1}{2}\left( \int\limits_{\Omega}\frac{|\nabla (u_j-\varphi)|^{p_j}}{p_j}+\int\limits_{\Omega}\frac{|\nabla \varphi|^{p_j}}{p_j}dx\right)\\ &\leq |\Omega|.
	\end{align}
\end{proof}
\begin{lemma}\label{normuniformlybounded}
	For any $j\in {\mathbb N}$ the following inequality follows from the preceding lemmas:
	\begin{equation}
		\||\nabla u_j|\|_{p_j}\leq 2e^{e^{-1}} \max\{1,|\Omega|\}.
	\end{equation}
\end{lemma}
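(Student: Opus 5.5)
The plan is to convert the modular bound of Lemma \ref{uj/2} into a Luxemburg norm bound. The one point needing care is that the Luxemburg norm in Definition \ref{deflp} is built from the modular $\int |f|^{p}$, without the factor $1/p$. Set $f=|\nabla u_j|/2$. Lemma \ref{uj/2} gives
\begin{equation*}
\int_\Omega \frac{f^{p_j}}{p_j}\,dx\leq |\Omega|.
\end{equation*}
The constant $e^{e^{-1}}$ in the statement points to the elementary inequality $t^{1/t}\leq e^{1/e}$ for $t>0$. This is the key step: it lets us absorb the weight $1/p_j$ after rescaling.

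Fix $\lambda\geq 1$ and estimate
\begin{equation*}
\int_\Omega \left(\frac{f}{\lambda}\right)^{p_j}dx=\int_\Omega \frac{f^{p_j}}{p_j}\,\frac{p_j}{\lambda^{p_j}}\,dx.
\end{equation*}
Choose $\lambda=e^{1/e}\mu$ with $\mu\geq 1$. Then $\lambda^{p_j}=\left(e^{1/e}\right)^{p_j}\mu^{p_j}\geq p_j\,\mu^{p_j}$, because $p_j^{1/p_j}\leq e^{1/e}$ gives $p_j\leq (e^{1/e})^{p_j}$. Hence $p_j/\lambda^{p_j}\leq \mu^{-p_j}\leq \mu^{-1}$, using $\mu\geq 1$ and $p_j>1$. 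This holds pointwise, even where $p_j$ is unbounded, so unboundedness of the exponent causes no difficulty.

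Now take $\mu=\max\{1,|\Omega|\}$. Then
\begin{equation*}
\int_\Omega \left(\frac{f}{\lambda}\right)^{p_j}dx\leq \frac{|\Omega|}{\max\{1,|\Omega|\}}\leq 1.
\end{equation*}
By the definition of the Luxemburg norm, $\|f\|_{p_j}\leq e^{e^{-1}}\max\{1,|\Omega|\}$. Multiplying by $2$ gives the claim for $\||\nabla u_j|\|_{p_j}$.

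The only step requiring any thought is the choice of $\lambda$ that absorbs the factor $p_j$ uniformly. The rest is routine.
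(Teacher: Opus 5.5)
Your proposal is correct and follows essentially the paper's route. The paper combines the norm comparison $\|\cdot\|_{\rho_p}\le e^{e^{-1}}\|\cdot\|_{\tilde\rho_p}$ with the bound $\bigl\|\,|\nabla u_j|/2\,\bigr\|_{\tilde\rho_{p_j}}\le\max\{1,|\Omega|\}$ from Lemma \ref{uj/2}. You carry out the same two steps in one computation, and you also justify the comparison via $p_j\le (e^{1/e})^{p_j}$, which the paper only asserts.
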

\begin{proof} For any measurable exponent $p:\Omega\rightarrow [1,\infty)$
	The modulars $\rho_p(u)=\int\limits_{\Omega}|u|^pdx$ and $\tilde{\rho}_p(u)=\int\limits_{\Omega}\frac{|u|^{p}}{p}dx$ define equivalent Luxemburg norms $\|\cdot\|_{\rho_p}$ and $\|\cdot\|_{\tilde{\rho}_p}$; more precisely
	$\|\cdot\|_{\rho_p}\leq e^{e^{-1}}\|\cdot\|_{\tilde{\rho}_p}$ and $\|\cdot\|_{\tilde{\rho}_p} \leq  \|\cdot\|_{\rho_p}.$ From lemma \ref{uj/2} it follows $\|\frac{|\nabla u_j|}{2}\|_{\tilde{\rho}_{p_j}}\leq \max\{ |\Omega|,1\}$;  the claim can be derived immmediately from this fact.
\end{proof}

\begin{lemma}\label{inclusionofV}
	For every $j\in {\mathbb N}$, $U^{1,p_j}_0(\Omega)\hookrightarrow W^{1,(p_j)_-}_0(\Omega)\hookrightarrow W^{1,\alpha}_0(\Omega)$.
\end{lemma}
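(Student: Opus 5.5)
The lemma has two inclusions, and I will handle them separately.

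For the first, $U^{1,p_j}_0(\Omega)\hookrightarrow W^{1,(p_j)_-}_0(\Omega)$, I will apply the Proposition of Section \ref{modulartopologies} with $p=p_j$. It asserts $U^{1,p}_0(\Omega)\subseteq W^{1,p_-}_0(\Omega)$ (inclusion (\ref{inclusionv1pw1p-})) for any continuous exponent with $p_->1$. By condition (\ref{c1}) and (\ref{pjgreater}), $p_j$ is continuous on $\Omega$ with $(p_j)_->\alpha>n\geq 1$, so the hypotheses hold.

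That proof also relies on Theorem \ref{pqmodularembedding1}, with $q\equiv (p_j)_-$ constant. I must therefore check $\int_\Omega e^{q}/q\,dx=|\Omega|e^{(p_j)_-}/(p_j)_-<\infty$, which holds since $\Omega$ is bounded. I must also check $q\le p_j$ a.e., which is immediate.

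I will also make the embedding continuous, not merely set-theoretic. For $v\in W^{1,p_j}(\Omega)$ and constant $q\le p_j$ on a bounded domain, I will use the norm inequality $\|v\|_{q}\le C(p_j,q,\Omega)\|v\|_{p_j}$ for $v$ and $\nabla v$, recalled after Definition \ref{deflp}. This gives $\|v\|_{W^{1,q}}\le C\|v\|_{1,p_j}$.

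The only delicate point is $p_+=\infty$. The norm inclusion $L^{p_j}\hookrightarrow L^{q}$ still holds, because $|f|^q\le 1+|f|^{p_j}$ pointwise whenever $|f|\le\lambda$-normalized. Concretely, if $\int|f/\lambda|^{p_j}\le1$ then $\int|f/\lambda|^q\le|\Omega|+1$. No boundedness of $p_j$ is needed for this.

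For the second inclusion, $W^{1,(p_j)_-}_0(\Omega)\hookrightarrow W^{1,\alpha}_0(\Omega)$, both exponents are constants with $\alpha<(p_j)_-$. Hölder's inequality on the bounded domain $\Omega$ gives $\|v\|_{W^{1,\alpha}}\le |\Omega|^{\frac1\alpha-\frac1{(p_j)_-}}\|v\|_{W^{1,(p_j)_-}}$.

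It remains to check that this continuous inclusion carries the zero-trace subspace into the zero-trace subspace. Take $v\in W^{1,(p_j)_-}_0(\Omega)$, and let $\phi_k\in C^\infty_0(\Omega)$ converge to $v$ in $W^{1,(p_j)_-}$. By the inequality above, $\phi_k\to v$ in $W^{1,\alpha}$ as well, so $v$ lies in the $W^{1,\alpha}$-closure of $C^\infty_0(\Omega)$, which is $W^{1,\alpha}_0(\Omega)$.

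Composing the two inclusions proves the lemma.

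The main obstacle is only making sure the Proposition's argument is legitimate when $(p_j)_+=\infty$. The key input there is the modular continuity of $i_{p_j,(p_j)_-}$ from Theorem \ref{pqmodularembedding1}, and its integrability hypothesis concerns only the smaller exponent $q=(p_j)_-$. Hence unboundedness of $p_j$ causes no difficulty.
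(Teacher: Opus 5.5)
Your proposal is correct and follows the paper's proof: the first inclusion is exactly (\ref{inclusionv1pw1p-}) applied with $p=p_j$, and the second is the elementary constant-exponent embedding on a bounded domain, which the paper invokes through (\ref{sobolevem}). Your extra checks make explicit what the paper leaves implicit, namely the norm continuity of $L^{p_j}\hookrightarrow L^{(p_j)_-}$ without boundedness of $p_j$ (via $t^q\le 1+t^{p_j}$) and the fact that the $C^\infty_0$-closure in $W^{1,(p_j)_-}$ lands in $W^{1,\alpha}_0(\Omega)$.
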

\begin{proof}
	The first inclusion is (\ref{inclusionv1pw1p-}), the second one follows from (\ref{sobolevem}).
\end{proof}
\begin{lemma}\label{unifbound}
	The sequence $(u_j)$ is bounded in $W^{1,\alpha}(\Omega)$, that is, there exists a positive constant $C$, independent of $j$ such that
	\begin{equation}
		\|u_j\|_{\alpha}+\||\nabla u_j|\|_{\alpha}\leq C,
	\end{equation}
	for all $j\in {\mathbb N}$.
\end{lemma}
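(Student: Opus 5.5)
The plan is to pass from the uniform bound on the gradients in the variable-exponent norm, Lemma \ref{normuniformlybounded}, to a uniform bound in the constant-exponent space $L^{\alpha}(\Omega)$. The function values are then handled by a Poincaré inequality on $W^{1,\alpha}_0(\Omega)$ applied to $u_j-\varphi$. Note that $u_j$ here is the minimizer in $U^{1,p_j}_0(\Omega)$, and the solution of (\ref{problemr1}) is $\varphi-u_j$ (or $u_j$ shifted by $\varphi$, according to the normalization in Theorem \ref{Dirichletvarphi}). In either reading, the quantities to control are $\||\nabla u_j|\|_{\alpha}$ and $\|u_j\|_{\alpha}$.

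\textbf{Step 1 (gradient bound).} Since $\alpha<(p_j)_-\le p_j$ on $\Omega$ and $|\Omega|<\infty$, the embedding $L^{p_j}(\Omega)\hookrightarrow L^{\alpha}(\Omega)$ is continuous. However, the constant $C(p_j,\alpha,\Omega)$ must not depend on $j$, so I will prove it directly. Suppose $\|f\|_{p_j}\le\lambda$, so that $\int_\Omega |f/\lambda|^{p_j}\,dx\le 1$. I split $\Omega$ into the set where $|f|/\lambda\le 1$ and its complement. On the first set $|f/\lambda|^{\alpha}\le 1$. On the complement $|f/\lambda|^{\alpha}\le |f/\lambda|^{p_j}$. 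Hence
\[
\int_\Omega |f/\lambda|^{\alpha}\,dx\le |\Omega|+1,
\]
and therefore $\|f\|_{\alpha}\le (1+|\Omega|)^{1/\alpha}\|f\|_{p_j}$, with a constant independent of $j$. Applying this with $f=|\nabla u_j|$ and using Lemma \ref{normuniformlybounded} gives
\[
\||\nabla u_j|\|_{\alpha}\le (1+|\Omega|)^{1/\alpha}\,2e^{e^{-1}}\max\{1,|\Omega|\}.
\]
The same argument applied to $\nabla\varphi$ gives $\||\nabla\varphi|\|_\alpha\le |\Omega|^{1/\alpha}$, using $\||\nabla\varphi|\|_\infty\le1$.

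\textbf{Step 2 (function values).} By Lemma \ref{inclusionofV}, $u_j\in U^{1,p_j}_0(\Omega)\subseteq W^{1,\alpha}_0(\Omega)$. The Poincaré inequality on $W^{1,\alpha}_0(\Omega)$, whose constant $C_P=C_P(\alpha,\Omega)$ does not depend on $j$, then yields
\[
\|u_j\|_{\alpha}\le C_P\||\nabla u_j|\|_{\alpha}.
\]
If the intended $u_j$ is instead the solution $w_j$ with $w_j-\varphi\in U^{1,p_j}_0(\Omega)$, I apply Poincaré to $w_j-\varphi$ and add $\|\varphi\|_{W^{1,\alpha}}$, which is finite because $\varphi\in W^{1,\infty}(\Omega)$ and $\Omega$ is bounded. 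Combining the two steps gives the constant $C$.

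\textbf{Main obstacle.} The only delicate point is uniformity in $j$. The embedding constants for variable exponents usually depend on $p_j$, and $p_j$ may be unbounded. The splitting argument in Step 1 avoids this, because it uses only $p_j\ge\alpha$ and $|\Omega|<\infty$. A second point is that the Poincaré inequality must be applied in the fixed space $W^{1,\alpha}_0(\Omega)$. This is legitimate because Lemma \ref{inclusionofV} places every $U^{1,p_j}_0(\Omega)$ inside that one space.
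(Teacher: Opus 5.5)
Your proof is correct and follows the paper's route. Poincaré's inequality in the fixed space $W^{1,\alpha}_0(\Omega)$ (via Lemma \ref{inclusionofV}) bounds $\|u_j\|_{\alpha}$ by $\||\nabla u_j|\|_{\alpha}$, and the gradient is then bounded through Lemma \ref{normuniformlybounded}. Your splitting argument, which gives $\|f\|_{\alpha}\le(1+|\Omega|)^{1/\alpha}\|f\|_{p_j}$, makes explicit that the embedding constant does not depend on $j$; the paper's one-line proof leaves this implicit.
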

\begin{proof}
	It follows from lemma \ref{inclusionofV}, from Theorem \ref{Dirichletvarphi} and from Poincar\'{e}'s inequality that $\|u_j\|_{\alpha}\leq C \||\nabla u_j|\|_{\alpha}$; notice that $C$ is independent of $j$. The rest of the proof follows on account of Lemma \ref{normuniformlybounded}.
\end{proof}

\begin{lemma}\label{equicontinuity}
	The sequence $(u_j)\subset C^{1-\frac{n}{\alpha}}(\Omega)$ is equicontinuous.
\end{lemma}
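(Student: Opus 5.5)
The plan is to obtain a uniform Hölder modulus from the Morrey-type estimate (\ref{holderestimate}) combined with the uniform bound of Lemma \ref{unifbound}. By Lemma \ref{inclusionofV}, each $u_j$ lies in $W^{1,\alpha}(\Omega)$, with the fixed exponent $\alpha>n$ of (\ref{pjgreater}). The constant $C=C(\Omega,\alpha,n)$ in (\ref{holderestimate}) depends only on $\Omega$, $\alpha$ and $n$, and hence not on $j$. Applying (\ref{holderestimate}) to $v=u_j$ gives, for all $x\neq y$ in $\Omega$,
\begin{equation*}
|u_j(x)-u_j(y)|\leq C(\Omega,\alpha,n)\,\|u_j\|_{W^{1,\alpha}(\Omega)}\,|x-y|^{1-\frac{n}{\alpha}}.
\end{equation*}

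Next I will invoke Lemma \ref{unifbound}, which bounds $\|u_j\|_{\alpha}+\||\nabla u_j|\|_{\alpha}$ by a constant $C'$ independent of $j$. This yields
\begin{equation*}
|u_j(x)-u_j(y)|\leq C\,C'\,|x-y|^{1-\frac{n}{\alpha}}\quad\text{for all } j\in{\mathbb N},\ x,y\in\Omega.
\end{equation*}
Equicontinuity then follows directly: given $\varepsilon>0$, take $\delta=(\varepsilon/(CC'))^{\alpha/(\alpha-n)}$, and this choice works uniformly in $j$. As a byproduct, each $u_j$ extends uniformly continuously to $\overline{\Omega}$ with the same modulus.

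The only point needing care is the uniformity of Lemma \ref{unifbound}. That lemma rests on Lemma \ref{normuniformlybounded} together with the comparison $\||\nabla u_j|\|_{\alpha}\leq C(\alpha,p_j,\Omega)\||\nabla u_j|\|_{p_j}$. I therefore need the embedding constant $L^{p_j}\hookrightarrow L^{\alpha}$ to be bounded independently of $j$. I would verify this directly with the modular. If $\||\nabla u_j|\|_{p_j}\leq\lambda$, then $\int(|\nabla u_j|/\lambda)^{p_j}\leq 1$. Since $p_j>\alpha$, pointwise $t^{\alpha}\leq 1+t^{p_j}$, so $\int(|\nabla u_j|/\lambda)^{\alpha}\leq |\Omega|+1$. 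Hence $\||\nabla u_j|\|_{\alpha}\leq(|\Omega|+1)^{1/\alpha}\lambda$, a bound independent of $j$.

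The Poincaré constant in $W^{1,\alpha}_0(\Omega)$ depends only on $\alpha$ and $\Omega$. To apply it, I need $u_j\in W^{1,\alpha}_0$, and this requires care. The minimizer from Theorem \ref{Dirichletvarphi} satisfies $u_j-\varphi\in U_0^{1,p_j}$. So I will apply Poincaré to $u_j-\varphi$ and add the fixed quantity $\|\varphi\|_{W^{1,\alpha}}$, which is finite since $\varphi\in W^{1,\infty}$. This gives the uniform bound in either reading of the notation, and it is the step I expect to be the main, though mild, obstacle.
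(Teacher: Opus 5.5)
Your proposal is correct and follows the paper's proof. The paper also combines the $j$-independent $W^{1,\alpha}$ bound of Lemma \ref{unifbound} with the Morrey estimate (\ref{holderestimate}) to obtain a uniform H\"older modulus; your extra checks (the $j$-independent embedding constant $(|\Omega|+1)^{1/\alpha}$ for $L^{p_j}\hookrightarrow L^{\alpha}$, and applying Poincar\'e to $u_j-\varphi$ instead of $u_j$) fill in details the paper leaves implicit in Lemma \ref{unifbound} without changing the route.
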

\begin{proof}
	The proof follows from Lemma \ref{unifbound} and estimate (\ref{holderestimate}).
\end{proof}

Theorem \ref{theoremain} is the central result of this paper. The fundamental difference between Theorem \ref{theoremain} and Theorem 1.1 in \cite{MRU} is that in contrast to the proof of Theorem 1.1, the proof below covers the possibility $\sup\limits_{x\in \Omega}p_j(x)=\infty$ for every $j$. The inclusion of unbounded exponents in the subsequent argument would not have been possible without the novel mathematical background developed in Sections \ref{modulartopologies} and \ref{dirichletinfinitylaplacian}.
\begin{theorem}\label{theoremain} Fix $\varphi \in W^{1,\infty}(\Omega)$ with $\||\nabla \varphi|\|_{\infty}\leq 1.$ Let $(p_j)$ be a sequence of functions satisfying assumptions $(1)-(3)$. In particular, we allow $\sup\limits_{x\in \Omega}p_j(x)=\infty.$ Assume $p_j(x)\rightarrow \infty$ uniformly in $\Omega$. For each $j$, let $u_j$ be the unique solution to the Dirichlet problem for variable exponent $p_j(\cdot)$-Laplacian
	\begin{equation}\label{problemr}
		\begin{cases}
			\Delta_{p_j(x)}u(x):=\text{div}\big(|\nabla u(x)|^{p_j(x)-2}\nabla u(x)\big) =0, & x\in\Omega\\
			u(x)=\varphi(x), & x\in\partial\Omega,
		\end{cases}
	\end{equation}
	given in Theorem \ref{Dirichletvarphi}.
	Then
	\[
	u_j\to u \quad \text{uniformly in }\Omega,
	\]
	where $u$ is the unique viscosity solution of
	\begin{equation}\label{limitproblem}
		\begin{cases}
			-\Delta_\infty u-|\nabla u|^2\ln|\nabla u|\langle \xi,\nabla u\rangle=0
			& \text{in }\Omega,\\
			u=\varphi & \text{on }\partial\Omega.
		\end{cases}
	\end{equation}
\end{theorem}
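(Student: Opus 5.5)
The plan follows the standard compactness-plus-stability scheme of \cite{MRU}, using the uniform estimates of Lemmas \ref{unifbound} and \ref{equicontinuity} in place of the bounded-exponent Harnack argument. The proof has four steps.

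\textbf{Step 1 (subsequential uniform limit).} By Lemma \ref{unifbound}, $(u_j)$ is bounded in $W^{1,\alpha}(\Omega)$. By Lemma \ref{equicontinuity} and estimate (\ref{holderestimate}), the family is equi-H\"older. Combined with the uniform bound $\|u_j\|_{C(\overline\Omega)}\le C\|u_j\|_{W^{1,\alpha}}$, Arzel\`a--Ascoli gives a subsequence $u_j\to u$ uniformly on $\overline\Omega$ and weakly in $W^{1,\alpha}(\Omega)$.

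The limit attains the boundary data. Indeed, $u_j-\varphi\in U^{1,p_j}_0(\Omega)\subset W^{1,\alpha}_0(\Omega)$ by Lemma \ref{inclusionofV}. Since $W^{1,\alpha}_0$ is weakly closed, $u-\varphi\in W^{1,\alpha}_0(\Omega)$, and hence $u=\varphi$ on $\partial\Omega$ by continuity.

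\textbf{Step 2 (Lipschitz bound on the limit).} Fix any $q>n$. For $j$ large we have $p_j>q$. From Lemma \ref{uj/2} and H\"older's inequality,
\[
\int_\Omega|\nabla u_j|^q\,dx\le |\Omega|^{1-q/\inf p_j}\Big(\int_\Omega|\nabla u_j|^{p_j}dx\Big)^{q/\inf p_j}.
\]
Here the inner integral is controlled by $2^{p_j}\sup p_j\,|\Omega|$. Since $p_j$ may be unbounded, the pointwise H\"older step has to be done carefully, splitting $\{|\nabla u_j|\le 1\}$ from its complement and using $p_j^{1/p_j}\to1$ uniformly. Letting $j\to\infty$ and using weak lower semicontinuity gives $\|\nabla u\|_q\le 2|\Omega|^{1/q}$, and then letting $q\to\infty$ gives $u\in W^{1,\infty}$. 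This step is not strictly needed for the viscosity argument, but it shows that the limit is meaningful.

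\textbf{Step 3 (viscosity stability).} By Proposition \ref{weakviscosity}, each $u_j$ is a viscosity solution of $-\Delta_{p_j(x)}u=0$. Let $\phi\in C^2$ be such that $u-\phi$ has a strict minimum at $x_0$. Uniform convergence yields points $x_j\to x_0$ at which $u_j-\phi$ attains a local minimum, so that $-\Delta_{p_j}\phi(x_j)\ge0$. Expanding via (\ref{contrad}) and dividing by $(p_j(x_j)-2)|\nabla\phi(x_j)|^{p_j(x_j)-4}$, assuming $\nabla\phi(x_0)\ne0$, gives
\[
-\frac{|\nabla\phi|^2\Delta\phi}{p_j-2}-\Delta_\infty\phi-\frac{|\nabla\phi|^2\ln|\nabla\phi|\langle\nabla\phi,\nabla p_j\rangle}{p_j-2}\ \ge0 \quad\text{at }x_j .
\]
The first term tends to $0$ by (\ref{c2}). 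The last term is where the structure of (\ref{limitproblem}) enters: $\nabla p_j/(p_j-2)$ must converge to $\xi$ locally uniformly near $x_0$. I read hypothesis (\ref{c3}) in the form (\ref{1.4}), namely $\nabla\ln p_j\to\xi$, and note that $p_j/(p_j-2)\to1$ uniformly. Passing to the limit then gives the supersolution inequality. If $\nabla\phi(x_0)=0$, the limiting operator vanishes and the inequality is trivial. The subsolution case is symmetric.

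\textbf{Step 4 (whole sequence).} By uniqueness of the viscosity solution of (\ref{limitproblemexistence}) with $f=\varphi$, every subsequence has a further subsequence converging to the same $u$. Hence the full sequence converges uniformly.

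\textbf{Main obstacle.} I expect Step 3 to be the hardest, specifically the uniform control of $\nabla p_j/p_j$ near $x_0$ when $p_j$ is unbounded. Since $x_0$ is interior and $p_j\in C^1(\Omega)$, I would work on a compact ball $\overline{B(x_0,r)}\subset\Omega$, where each $p_j$ is bounded. This localises away the unboundedness, which only matters near $\partial\Omega$. The boundary behaviour is handled separately by the $W^{1,\alpha}_0$ argument of Step 1.
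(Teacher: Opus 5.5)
Your proposal is correct and follows the paper's proof essentially step for step. It uses Arzel\`a--Ascoli via Lemma~\ref{equicontinuity}, then tests each $u_j$ (a viscosity solution by Proposition~\ref{weakviscosity}) at points $x_j\to x_0$ and passes to the limit in the rescaled inequality; there the paper, exactly like you, needs $\nabla p_j/(p_j-2)\to\xi$, i.e.\ hypothesis (\ref{c3}) read as (\ref{1.4}). Your Steps 1 and 4 only make explicit the boundary-value and full-sequence (uniqueness) arguments that the paper leaves implicit, and Step 2 is superfluous: its displayed H\"older estimate is not valid as written when $\sup p_j=\infty$, but nothing in the proof depends on it.
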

\begin{proof}
	
	Owing to Lemma \ref{equicontinuity}, it follows from Ascoli's theorem, extracting a
	subsequence if necessary, that
	\[
	u_j \to u \quad \text{uniformly in } \Omega,
	\]
	for a certain continuous function $u$. Since $u_j = \varphi$ on $\partial\Omega$, it is clear that $u = \varphi$ on $\partial\Omega$.
	
	To prove that $u$ is a viscosity supersolution of problem (\ref{limitproblem}), let $\beta \in C^2(\Omega)$ be such that
	$u-\beta$ has a strict minimum at $x_0 \in \Omega$, with
	$\beta(x_0) = u(x_0)$. We will prove that
	\begin{equation}\label{vissuper}
		-\Delta \beta(x_0) - |\nabla \beta(x_0)|^2
		\ln |\nabla \beta(x_0)| \langle \xi(x_0), \nabla \beta(x_0) \rangle
		\ge 0 .
		\tag{3.1}
	\end{equation}
	
	Since $u_j \to u$ uniformly, there is a sequence $(x_j)_j$ such that
	$x_j \to x_0$ and $u_j - \beta$ has a minimum at $x_j$.
	As $u_j$ is a viscosity solution of problem (\ref{problemr}) it follows that
	\[
	-\frac{|\nabla \beta(x_j)|^2 \Delta\beta(x_j)}{p_j(x_j)-2}
	-\Delta_{\infty}\beta(x_j)
	-|\nabla \beta(x_j)|^2 \ln |\nabla \beta(x_j)|
	\left\langle
	\nabla \beta(x_j),
	\frac{\nabla p_j(x_j)}{p_j(x_j)-2}
	\right\rangle
	\ge 0 .
	\]
	
	Now $x_j \to x_0$, thus, since $2\leq n<\alpha < p_j\rightarrow \infty$ uniformly, 
	\[
	\frac{|\nabla \beta(x_j)|^2 \Delta \beta(x_j)}{p_j(x_j)-2}
	\to 0,
	\]
	
	\[
	\Delta_{\infty} \beta(x_j) \to \Delta_{\infty} \beta(x_0),
	\]
	
	\[
	|\nabla \beta(x_j)|^2 \ln(|\nabla \beta(x_j)|)
	\to
	|\nabla \beta(x_0)|^2 \ln(|\nabla \beta(x_0)|),
	\]
	
	and
	\[
	\left\langle
	\nabla \beta(x_j),
	\frac{\nabla p_j(x_j)}{p_j(x_j)-2}
	\right\rangle
	\to
	\langle \nabla \beta(x_0), \xi(x_0) \rangle .
	\]
	
	Inequality (\ref{vissuper}) follows immediately.
	
	Hence $u$ is a viscosity supersolution; a mutatis-mutandis type of argument proves that $u$ is also a subsolution.
\end{proof}

\end{document}